\newtheorem{theorem}{Theorem}
\newtheorem{lemma}{Lemma}
\newtheorem{proposition}{Proposition}
\newtheorem{remark}{Remark}
\newtheorem{definition}{Definition}
\newcommand{\C}{\mathbb{C}}
\newcommand{\Z}{\mathbb{Z}}
\newcommand{\trace}{{\rm trace}}
\newcommand{\meas}{{\rm meas}}
\newcommand{\tr}{{\rm tr}}
\newcommand{\Ad}{{\rm Ad}}
\newcommand{\Wh}{{\rm Wh}}
\newcommand{\Hom}{{\rm Hom}}
\newcommand{\Int}{{\rm Int}}
\newcommand{\val}{{\rm val}}
\newcommand{\supp}{\rm{supp}}
\newcommand{\g}{\mathfrak{g}}
\newcommand{\bg}{\boldsymbol{\mathfrak{g}}}
\def\oversortoftilde#1{\mathop{\vbox{\m@th\ialign{##\crcr\noalign{\kern3\p@}%
      \sortoftildefill\crcr\noalign{\kern3\p@\nointerlineskip}%
      $\hfil\displaystyle{#1}\hfil$\crcr}}}\limits}
\def\sortoftildefill{$\m@th \setbox\z@\hbox{$\braceld$}%
  \braceld\leaders\vrule \@height\ht\z@ \@depth\z@\hfill\braceru$}
\title{A theorem of M\oe glin-Waldspurger for covering groups}
\author{Shiv Prakash Patel}
\address{School of Mathematics \\
Tata Institute of Fundamental Research \\
Homi Bhabha Road, Colaba \\
Mumbai - 400005\\
INDIA}
\email{shiv@math.tifr.res.in}
\subjclass[2010]{Primary 22E50; Secondary 11F70, 11S37}
\keywords{Covering groups, character expansion, degenerate Whittaker forms}
\date{\today}
\begin{document}

\begin{abstract}
Let $E$ be a non-Archimedian local field of characteristic zero and residual characteristic $p$. Let ${\bf G}$ be a connected reductive group defined over $E$ and $\pi$ an irreducible admissible representation of $G={\bf G}(E)$. A result of C. M{\oe}glin and J.-L. Waldspurger (for $p \neq 2$) and S. Varma (for $p=2$) states that the leading coefficient in the character expansion of $\pi$ at the identity element of ${\bf G}(E)$ gives the dimension of a certain space of degenerate Whittaker forms. In this paper we generalize this result of M{\oe}glin-Waldspurger to the setting of covering groups $\tilde{G}$ of $G$.
\end{abstract}
\maketitle

\section{Introduction}  
Let $E$ be a non-Archimedian local field of characteristic zero and ${\bf G}$ a connected split reductive group defined over $E$ and $G= {\bf G}(E)$.  Let $\bg = \text{Lie}({\bf G})$ be the Lie algebra of ${\bf G}$ and $\g = \bg (E)$. Let $(\pi, W)$ be an irreducible admissible representation of $G$. A theorem of F. Rodier, in \cite{Rod75}, relates the dimension of the space of non-degenerate Whittaker forms of $\pi$ and coefficients in the character expansion of $\pi$ around identity. More precisely, Rodier proves that if the residual characteristic of $E$ is large enough and the group ${\bf G}$ is split then the dimension of any space of non-degenerate Whittaker functionals for $(\pi, W)$ equals the coefficient in the character expansion of $\pi$ at identity corresponding to an appropriate maximal nilpotent orbit in the Lie algebra $\g$. Rodier proved his theorem assuming that the residual characteristic of $E$ is large enough, in fact, greater than a constant which depends only on the root datum of ${\bf G}$. A theorem of C. M{\oe}glin and J.-L. Waldspurger \cite{MW87} generalizes this theorem of Rodier in several directions, in particular proving the theorem of Rodier for the fields $E$ whose residual characteristic is odd and removing the assumption on ${\bf G}$ being split. The theorem of M{\oe}glin-Waldspurger is a more precise statement about the coefficients appearing in the character expansion around identity and certain spaces of `degenerate' Whittaker forms. In a recent work of S. Varma \cite{San14} this theorem has been proved for fields with even residual characteristic by modifying certain constructions in \cite{MW87} to accommodate the case of even residual characteristic (see the remark at the end of the introduction). So the theorem of M{\oe}glin-Waldspurger is true for all connected reductive groups without any restriction on the residual characteristic of the field $E$. We now recall the theorem of M{\oe}glin-Waldspurger. To state the theorem we need to introduce some notation. Let $Y$ be a nilpotent element in $\bg$ and suppose $\varphi : \mathbb{G}_{m} \longrightarrow {\bf G}$ is a one parameter subgroup satisfying
\begin{equation} \label{condition:a}
 Ad(\varphi(t))Y=t^{-2}Y.
\end{equation}
Associated to such a pair $(Y,\varphi)$ one can define a certain space $\mathcal{W}_{(Y, \varphi)}$, called the space of degenerate Whittaker forms of $(\pi, W)$ relative to $(Y, \varphi)$ (see Section \ref{degenerate_W_forms}  for the definition).  \\ 

Define $\mathcal{N}_{Wh}(\pi)$ to be the set of nilpotent orbits $\mathcal{O}$ of $\mathfrak{g}$ for which there exists an element $Y \in \mathcal{O}$ and a $\varphi$ satisfying (\ref{condition:a}) such that the space $\mathcal{W}_{(Y, \varphi)}$ of degenerate Whittaker forms  relative to the pair $(Y, \varphi)$ is non-zero. \\ 

Recall that the character expansion of $(\pi, W)$ around identity is a sum $\sum_{\mathcal{O}} c_{\mathcal{O}} \widehat{\mu_{\mathcal{O}}}$, where $\mathcal{O}$ varies over the set of nilpotent orbits of $\mathfrak{g}$, $c_{\mathcal{O}} \in \C$ and $\widehat{\mu_{\mathcal{O}}}$ is the Fourier transform of a suitably chosen measure $\mu_{\mathcal{O}}$ on $\mathcal{O}$. One defines $\mathcal{N}_{tr}(\pi)$ to be the set of nilpotent orbits $\mathcal{O}$ of $\mathfrak{g}$ such that the corresponding coefficient $c_{\mathcal{O}}$ in the character expansion of $\pi$ around identity is non zero. \\ 

We have the standard partial order on the set of nilpotent orbits in $\mathfrak{g}$: $\mathcal{O}_{1} \leq \mathcal{O}_{2}$ if $\mathcal{O}_{1} \subset \overline{\mathcal{O}_{2}}$. Let ${\rm Max}(\mathcal{N}_{Wh}(\pi))$ and ${\rm Max}(\mathcal{N}_{tr}(\pi))$  denote the set of maximal element in $\mathcal{N}_{Wh}(\pi)$ and $\mathcal{N}_{tr}(\pi)$ respectively with respect to this partial order. Then the main theorem of M{\oe}glin-Waldspurger in Chapter I of \cite{MW87} is as follows:
\begin{theorem} \label{theorem:M-W}
Let ${\bf G}$ be a connected reductive group defined over $E$. Let $\pi$ be an irreducible admissible representation of $G={\bf G}(E)$ then
\[
 {\rm Max}(\mathcal{N}_{Wh}(\pi)) =  {\rm Max}(\mathcal{N}_{tr}(\pi)).
\]
Moreover, if $\mathcal{O}$ is an element in either of these sets, then for any $(Y, \varphi)$ as above with $Y \in \mathcal{O}$ we have
\[
c_{\mathcal{O}} = \dim \mathcal{W}_{(Y, \varphi)}.
\]
\end{theorem}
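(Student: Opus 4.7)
The plan is to realize both $c_{\mathcal{O}}$ and $\dim\mathcal{W}_{(Y,\varphi)}$ as the dimension of one and the same $(K,\chi)$-isotypic component of $\pi$, for a compact open subgroup $K$ and a character $\chi$ built from the pair $(Y,\varphi)$, and then use the Harish-Chandra--Howe local character expansion together with a homogeneity argument on $\widehat{\mu_{\mathcal{O}}}$ to isolate the contribution of the orbit of $Y$. First, I would use $\varphi$ to grade the Lie algebra, $\g=\bigoplus_{i\in\Z}\g_{i}$ with $\g_{i}=\{X:\Ad(\varphi(t))X=t^{i}X\}$; by (\ref{condition:a}), $Y\in\g_{-2}$. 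Fix an $\Ad$-invariant non-degenerate bilinear form $B$ on $\g$ (which pairs $\g_{i}$ with $\g_{-i}$) and a good lattice $\mathfrak{L}\subset\g$. For each large $n$, build a compact open subgroup $K_{n}\subset G$ by exponentiating a lattice $\bigoplus_{i}\varpi^{a_{n}(i)}(\mathfrak{L}\cap\g_{i})$, with exponents $a_{n}(i)$ increasing linearly in $i$, and define a character $\psi_{Y}$ of $K_{n}$ by $\exp(X)\mapsto\psi(B(Y,X))$ for a fixed non-trivial additive character $\psi$ of $E$. The arrangement is rigged so that $\psi_{Y}$ is a genuine character of $K_{n}$ and so that $\Hom_{K_{n}}(\pi,\psi_{Y})$ stabilizes to $\mathcal{W}_{(Y,\varphi)}$, in the manner of the definition of degenerate Whittaker forms recalled in Section \ref{degenerate_W_forms}.

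The same dimension equals the trace on $\pi$ of the idempotent $e_{n}=\mathrm{vol}(K_{n})^{-1}\psi_{Y}^{-1}(\cdot)\mathbf{1}_{K_{n}}(\cdot)$. Applying $\pi(e_{n})$, using the character expansion valid on $K_{n}$ for $n$ large, and transferring everything to the Lie algebra via $\exp$, one obtains the identity
\[
\dim\mathcal{W}_{(Y,\varphi)}\;=\;\sum_{\mathcal{O}}c_{\mathcal{O}}\,\widehat{\mu_{\mathcal{O}}}(\widehat{f}_{n}),
\]
where $\widehat{f}_{n}$ is the Fourier transform, with respect to $B$ and $\psi$, of the Lie-algebra avatar of $e_{n}$. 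The heart of the proof is then the analytic claim that, for $\mathcal{O}$ maximal in $\mathcal{N}_{tr}(\pi)$, only the orbit of $Y$ contributes to the right-hand side, with value $1$. I would establish this by exploiting that conjugation by $\varphi(\varpi^{n})$ rescales each $\g_{i}$ by $\varpi^{ni}$ while $\widehat{\mu_{\mathcal{O}}}$ is homogeneous of a definite degree under such dilations; this forces the effective support of $\widehat{f}_{n}$ to concentrate, up to a $\varphi$-stable neighbourhood of the origin, on the affine subspace $Y+\bigoplus_{i>-2}\g_{i}$, and a standard closure-order argument shows that a nilpotent orbit meeting this slice either coincides with that of $Y$ or strictly dominates it, the latter option being ruled out by maximality.

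Once this scalar equality is in hand, the coincidence of the two Max sets follows formally. If $\mathcal{O}\in\mathrm{Max}(\mathcal{N}_{tr}(\pi))$, pick any $Y\in\mathcal{O}$ together with a compatible $\varphi$ (available over a $p$-adic field via Jacobson--Morozov applied to $Y$); then the displayed identity yields $\dim\mathcal{W}_{(Y,\varphi)}=c_{\mathcal{O}}\neq 0$, hence $\mathcal{O}\in\mathcal{N}_{Wh}(\pi)$, and applying the same computation to any orbit dominating $\mathcal{O}$ confirms maximality inside $\mathcal{N}_{Wh}(\pi)$. The reverse inclusion is symmetric. The principal obstacle, and what in my opinion makes the theorem non-trivial, is precisely the Fourier-analytic step above: one must carefully rule out contributions from non-maximal orbits and from other maximal orbits, and this requires a delicate comparison between the $\varphi$-adapted filtration, the closure order on nilpotent orbits, and the supports of the homogeneous distributions $\widehat{\mu_{\mathcal{O}}}$, rather than any soft symmetry.
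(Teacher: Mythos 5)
Your architecture is the right one and is essentially the paper's (i.e.\ M\oe glin--Waldspurger's, as adapted in Sections \ref{G_n ans chi_n}--\ref{proof of main theorem}): compute the dimension of a single $(K_n,\psi_Y)$-isotypic space once via the local character expansion, giving $c_{\mathcal{O}}$, and once via comparison with $W/N'_{\chi}W$, giving $\dim\mathcal{W}_{(Y,\varphi)}$. But the step you dispose of as setup --- ``the arrangement is rigged so that $\Hom_{K_n}(\pi,\psi_Y)$ stabilizes to $\mathcal{W}_{(Y,\varphi)}$'' --- is not a construction; it is one of the two hard halves of the theorem, it is \emph{false} for a general pair $(Y,\varphi)$, and its proof cannot precede the character-expansion computation. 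What holds unconditionally is only one inclusion: every vector of the relevant isotypic part of $W/N'_{\chi}W$ lifts to the isotypic space $W_n'$ for large $n$ (Lemma \ref{W_n non zero}). The reverse direction --- injectivity of the maps $j_n'$ and vanishing of $W'_{n,\chi}=\bigcup_m\ker I'_{n,m}$ --- requires the convolution identity for $\phi_n'\ast\phi_{n+1}'\ast\phi_n'$ (supported on $G_n'(G_{n+1}\cap G(Y))G_n'$, Lemma \ref{injectivity}) together with the finiteness and $n$-independence of $\dim W_n$, which is exactly the output of the expansion computation under the hypothesis that $\mathcal{O}_Y$ is maximal in $\mathcal{N}_{tr}(\pi)$ (Lemma \ref{dimW_n is c_O}). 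So the logical order must be: first $\dim W_n=c_{\mathcal{O}}$, then $W_n'\cong\mathcal{W}_{(Y,\varphi)}$; writing $\dim\mathcal{W}_{(Y,\varphi)}=\operatorname{trace}\pi(e_n)$ at the outset assumes the conclusion. (Also: when $\g_1\neq0$ the isotypic functor is $\Hom_H(\mathcal{S},-)$ for a Heisenberg group rather than a character; for $p=2$ the map $\exp X\mapsto\psi(B(Y,X))$ need not be a character of $K_n$ at all, which is the point of Varma's modification; and with $Y\in\g_{-2}$ your exponents $a_n(i)$ should decrease, not increase, in $i$.)

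The second gap is in the deduction of $\mathrm{Max}(\mathcal{N}_{Wh})=\mathrm{Max}(\mathcal{N}_{tr})$, which is not ``symmetric.'' For an orbit $\mathcal{O}'$ strictly dominating a maximal $\mathcal{O}\in\mathcal{N}_{tr}(\pi)$ you cannot ``apply the same computation,'' because the full identity $\dim\mathcal{W}_{(Y',\varphi')}=c_{\mathcal{O}'}$ is only proved for orbits maximal in $\mathcal{N}_{tr}(\pi)$. What is needed --- both to see that such an $\mathcal{O}'$ cannot lie in $\mathcal{N}_{Wh}(\pi)$ and to prove the converse inclusion of Max sets --- is the separate one-sided statement valid for \emph{arbitrary} $(Y,\varphi)$: if $\mathcal{W}_{(Y,\varphi)}\neq0$ then $W_n'\neq0$ for large $n$, and hence some $\mathcal{O}''\in\mathcal{N}_{tr}(\pi)$ satisfies $\mathcal{O}_Y\leq\mathcal{O}''$ (Proposition \ref{W is non-zero}). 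The ingredients for this are implicit in your homogeneity/support analysis, but it must be isolated and proved as such rather than absorbed into a symmetry claim. Finally, note that the statement as posed is the linear-group theorem, which the paper itself quotes from \cite{MW87} and \cite{San14}; the comparison above is with the proof the paper gives of its covering-group analogue, which specializes to this case.
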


If one considers the case of the pair $(Y, \varphi)$ with $Y$ a `regular' nilpotent element then the above theorem of M{\oe}glin-Waldspurger specializes to Rodier's theorem. \\

In this paper we generalize the theorem of M{\oe}glin-Waldspurger to the setting of a covering group $\tilde{G}$ of $G$. Let $\mu_{r}$ be the group of $r$-th roots of unity in $\C^{\times}$. An $r$-fold covering group $\tilde{G}$ of $G$ is a central extension of locally compact groups by $\mu_{r} := \{ z \in \C \mid z^{r}=1 \}$ giving rise to the following short exact sequence
\begin{equation} \label{def:cover}
 {1} \longrightarrow \mu_r \longrightarrow \tilde{G} \longrightarrow G \longrightarrow {1}.
\end{equation}

The representations of $\tilde{G}$ on which $\mu_{r}$ acts by the natural embedding $\mu_{r} \hookrightarrow \C^{\times}$ are called genuine representations. The definition of the space of degenerate Whittaker forms of a representation of $G$ involves only unipotent groups. Since the covering $\tilde{G} \longrightarrow G$ splits over any unipotent subgroup of $G$ in a unique way, see \cite{MW95}, this makes it possible to define the space of degenerate Whittaker forms for any genuine smooth representation $(\pi, W)$ of $\tilde{G}$. In particular, it makes sense to talk of the set $\mathcal{N}_{Wh}(\pi)$.\\
 
The existence of character expansion of an admissible genuine representation $(\pi, W)$ of $\tilde{G}$ has been proved by Wen-Wei Li in \cite{WWLi}. At identity, the Harish-Chandra-Howe character expansion of an irreducible genuine representation has the same form and therefore we have $\mathcal{N}_{\tr}(\pi)$. This makes it possible to have an analogue of Theorem \ref{theorem:M-W} to the setting of covering groups. The main aim of this paper is to prove the following.
\begin{theorem} \label{main:theorem}
Let $\pi$ be an irreducible admissible genuine representation of $\tilde{G}$. Then
\[
 {\rm Max}(\mathcal{N}_{Wh}(\pi)) =  {\rm Max}(\mathcal{N}_{tr}(\pi)).
\]
Moreover, if $\mathcal{O}$ is an element in either of these sets, then for any $(Y, \varphi)$ as above with $Y \in \mathcal{O}$ we have
\[
c_{\mathcal{O}} = \dim \mathcal{W}_{(Y, \varphi)}.
\]
\end{theorem}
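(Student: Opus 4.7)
The plan is to transpose the argument of \cite{MW87} (with Varma's refinements \cite{San14} for residual characteristic $2$) to $\tilde{G}$ by systematically exploiting two elementary facts. First, the covering $\tilde{G} \to G$ admits a canonical splitting over every unipotent subgroup of $G$, so the character $\psi_{Y}$ attached to $(Y,\varphi)$ and the space $\mathcal{W}_{(Y,\varphi)}$ are unambiguously defined on $\tilde{G}$. Second, over a sufficiently small compact open neighborhood $U$ of the identity in $G$, the covering is topologically trivial, so genuine functions on $\tilde{G}$ supported near the identity can be compared with their pullbacks to $G$. Wen-Wei Li's Harish-Chandra-Howe expansion for $\tilde{G}$ \cite{WWLi} supplies the analytic input and has the same nilpotent-orbit form on the genuine part as in the linear case.

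Concretely, I would fix a pair $(Y,\varphi)$ with $\mathcal{O}$ maximal in $\mathcal{N}_{Wh}(\pi)$ (respectively in $\mathcal{N}_{tr}(\pi)$) and introduce, as in \cite{MW87}, a decreasing family of compact open subgroups $K_n \subset G$ adapted to the grading of $\g$ induced by $\varphi$, together with characters $\psi_n$ of $K_n$ extending $\psi_Y$ on the unipotent part. For $n$ large enough that $K_n \subset U$, a chosen section identifies $\tilde{K}_n$ with $\mu_r \times K_n$, and the twisted invariants $\pi^{K_n,\psi_n}$ are well defined as the $(\tilde{K}_n,\psi_n)$-isotypic component of the genuine representation $\pi$. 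The proof of \cite{MW87, San14} then computes $\dim \pi^{K_n,\psi_n}$ in two ways: on the character side, by integrating $\psi_n^{-1}$ against the character of $\pi$ on $K_n$ and applying Li's germ expansion, which produces an asymptotic formula in the $c_{\mathcal{O}'}$ whose leading term isolates $c_{\mathcal{O}}$; on the Whittaker side, by an exhaustion argument identifying $\pi^{K_n,\psi_n}$, for $n$ large, with $\mathcal{W}_{(Y,\varphi)}$. Both computations involve only (a) the character restricted to a neighborhood of the identity where $\tilde{G}$ splits, and (b) twisted Jacquet functors along unipotent subgroups over which $\tilde{G}$ splits canonically, so the argument transcribes once a compatible local splitting is fixed.

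The principal obstacle I foresee is reconciling the two splittings used: the local splitting of $\tilde{G}$ over $K_n$ and the canonical splittings over the various unipotent subgroups appearing in the Iwahori-type decomposition of $K_n$ with respect to the parabolic attached to $\varphi$. If these splittings disagree, then $\psi_n$ viewed through the local splitting need not restrict to the intrinsic character $\psi_Y$ on the unipotent factor, and the twisted invariants cease to capture $\mathcal{W}_{(Y,\varphi)}$. Any two continuous splittings of $\tilde{K}_n \to K_n$ differ by a continuous homomorphism $K_n \to \mu_r$, and I expect that by pushing $n$ far enough into the standard filtration one can force this homomorphism to be trivial, so that the local splitting can be taken to agree with the canonical one on each unipotent piece. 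Once this compatibility is established, the residual-characteristic-two modifications of \cite{San14} apply without change, since they concern only constructions on unipotent subgroups where the covering is canonically split, and the proof concludes by matching leading terms in the two asymptotic expressions for $\dim \pi^{K_n,\psi_n}$.
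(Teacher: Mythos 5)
Your strategy is the same as the paper's (transfer the $G_{n}$, $\chi_{n}$ machinery of M\oe glin--Waldspurger \cite{MW87} and Varma \cite{San14} through a local splitting, use Li's expansion \cite{WWLi} on the character side and the exhaustion/twisted-Jacquet argument on the Whittaker side), and you have correctly isolated the one genuinely new point: making the local splitting over the compact open subgroups compatible with the canonical splittings over unipotent subgroups. But your proposed resolution of that point --- that any two splittings of $\tilde{K}_{n} \to K_{n}$ differ by a homomorphism $K_{n} \to \mu_{r}$ and that ``by pushing $n$ far enough into the standard filtration one can force this homomorphism to be trivial'' --- fails whenever $p \mid r$. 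The groups $G_{n} = \exp(\varpi^{n}L)$ (your $K_{n}$) are pro-$p$, and for every $n$ they admit many nontrivial continuous characters valued in $\mu_{p} \subseteq \mu_{r}$ (any nontrivial character of the finite elementary abelian quotient $G_{n}/[G_{n},G_{n}]G_{n}^{p}$ gives one). So the splitting over $G_{n}$ never becomes unique, however deep you go in the filtration; uniqueness holds only when $\gcd(r,p)=1$.

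The fix the paper uses is different, and you need some version of it. Fix $m$ with the cover split over $G_{m}$ and pass to the subgroup of $r$-th powers $G_{m}^{r} = \exp(r\varpi^{m}L)$: any two splittings over $G_{m}$ differ by a character valued in $\mu_{r}$, hence agree on $G_{m}^{r}$, so the restriction to $G_{m}^{r}$ is canonical. Then take $n$ large enough that $G_{n} \subseteq G_{m}^{r}$ and use that restricted splitting. This choice is automatically conjugation-equivariant, $s(hgh^{-1}) = h\,s(g)\,h^{-1}$ on $\bigcup_{h \in G} hG_{n}h^{-1}$, and it agrees with the unique unipotent splitting on $N \cap G_{n}$ (both are restrictions of splittings over $N \cap G_{m}$, which must agree on $r$-th powers). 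The equivariance is not a luxury: the argument forms $G_{n}' = \Int(\tilde{t}^{-n})(G_{n})$ and $\chi_{n}' = \chi_{n} \circ \Int(\tilde{t}^{n})$, and needs $\widetilde{\phi \ast \phi'} = \tilde{\phi} \ast \tilde{\phi}'$ for functions supported on these conjugates, all read through one coherent splitting; without it, the compatibility of $\chi_{n}'$ with $\chi$ on $V_{n}'$, the identification of $W_{n}'$ inside $W/N_{\chi}'W$, and the injectivity of $I_{n,n+1}'$ do not transfer from the linear case. With this correction the rest of your outline goes through as in the paper.
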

 We will use the work of M{\oe}glin-Waldspurger \cite{MW87} and to accommodate the even residual characteristic case, we follow Varma \cite{San14}. Let us describe some ideas involved in the proof. Let $Y$ be a nilpotent element in $\g$ and $\varphi$ a one parameter subgroup as above. Let $\bg_{i}$ be the eigenspace of weight $i$ under the action of $\mathbb{G}_{m}$ on $\bg$ via $\Ad \circ \varphi$. One can attach a parabolic subgroup ${\bf P}$ with unipotent radical ${\bf N}$ whose Lie algebras are $\boldsymbol{\mathfrak{p}} := \oplus_{i \geq 0} \bg_{i}$ and  $\boldsymbol{\mathfrak{n}}= \oplus_{i > 0} \bg_{i}$ respectively. The one parameter subgroup $\varphi $ also determines a parabolic subgroup ${\bf P}^{-}$ opposite to ${\bf P}$ with Lie algebra $\boldsymbol{\mathfrak{p}}^{-} = \oplus_{i \leq 0} \bg_{i}$. For simplicity, assume $\bg_{1} =0$ for the purpose of the introduction. Then $\boldsymbol{\mathfrak{n}} = \oplus_{i \geq 2} \bg_{i}$ and $\chi : \gamma \mapsto \psi(B(Y, \log \gamma))$ defines a character of $N= {\bf N}(E)$, where $B$ is an $\Ad(G)$-invariant non-degenerate symmetric bilinear form on $\g$ and $\psi$ is an additive character of $E$. In this case (i.e., $\bg_{1}=0$), the space of degenerate Whittaker forms $\mathcal{W}_{(Y,\varphi)}$ is defined to be the twisted Jacquet module of $\pi$ with respect to $(N, \chi)$. In the case where $\bg_{1} \neq 0$, the definition of $\mathcal{W}_{(Y, \varphi)}$ needs to be appropriately modified (see Section \ref{degenerate_W_forms}).\\
 
 On the other hand, to the pair $(Y, \varphi)$ one attaches certain open compact subgroups $G_{n}$ of $G$ for large $n$ and certain characters $\chi_{n}$ of $G_{n}$. One then proves that the covering $\tilde{G} \longrightarrow G$ splits over $G_{n}$ for large $n$, so that $G_{n}$ can be seen as subgroups of $\tilde{G}$ as well. Let $t := \varphi(\varpi)$ and $\tilde{t}$ be any lift of $t$ in $\tilde{G}$. It turns out that ${\tilde{t}}^{-n} G_{n} {\tilde{t}}^{n} \cap N$ becomes ``arbitrarily large'' subgroup of $N$ and ${\tilde{t}}^{-n}G_{n}{\tilde{t}}^{n} \cap P^{-}$ gets ``arbitrarily small'' subgroup of $P^{-}$, as $n$ becomes large. For large $n$, the characters $\chi_{n}$ have been so defined that that the character $\chi_{n}' := \chi_{n} \circ \Int({\tilde{t}}^{n})$ restricted to ${\tilde{t}}^{-n} G_{n} {\tilde{t}}^{n} \cap N$ agrees with $\chi$. Using Harish-Chandra-Howe character expansion one proves that the dimension of $(G_{n}, \chi_{n})$-isotypic component of $W$ is equal to $c_{\mathcal{O}}$ for large enough $n$, where $\mathcal{O}$ is the nilpotent orbit of $Y$ in $\g$. 
 Finally one proves that there is a natural isomorphism between $(\tilde{t}^{-n}G_{n}\tilde{t}^{n}, \chi_{n} \circ \Int(\tilde{t}^{n}))$-isotypic component of $W$ and $\mathcal{W}_{(Y, \varphi)}$.
\begin{remark}
 The definition of $\mathcal{W}_{(Y, \varphi)}$ (hence that of $\mathcal{N}_{\Wh}(\pi)$) depends on a choice of an additive character $\psi$  of $E$ and a choice of $\Ad(G)$-invariant non-degenerate bilinear form $B$ on $\g$. On the other hand, in the character expansion, $c_{\mathcal{O}}$'s (hence $\mathcal{N}_{\tr}(\pi)$) depend on $\psi$, $B$, a measure on $G$ and a measure on $\g$. However by choosing a compatible measure on $G$ and $\g$ via $\exp$ map one gets rid of the dependency of $c_{\mathcal{O}}$ on these measures on $G$ and $\g$ and therefore depends only on $\psi$ and $B$. For more detailed discussion about the dependency on $B$ and $\psi$ on the results here, see Remark 4 in \cite{San14}.
 \end{remark} 
\begin{remark} 
One aspect in Verma's proof for $p=2$, which does not obviously generalise from the proof of $p \neq 2$ is the prescription of the character $\chi_{n}$ of $G_{n}$ given in \cite{MW87}, which is due to somewhat bad behaviour of Campbell-Hausdorff formula in $p=2$ case. Using Kirillov theory of compact $p$-adic groups Varma prescribes a $\chi_{n}$ (although not unique) which will serve our purpose. On the other hand, the definition of degenerate Whittaker forms of $W$ has also been modified by Varma to accommodate the case $p=2$.
\end{remark} 
Although the methods used in the paper are not new and heavily depend on the proofs in the linear case, the result is useful in the study of representation theory of covering groups. Author himself has made use of this result in his thesis, where he attempts to generalize a result of D. Prasad in \cite{Prasad92} in the setting of covering groups, namely, in the harmonic analysis relating the pairs $(\widetilde{{\rm GL}_{2}(E)}, {\rm GL}_{2}(F))$ and $(\widetilde{{\rm GL}_{2}(E)}, D_{F}^{\times})$, where $E/F$ is a quadratic extension of non-Archimedian local field, $D_{F}$ is the quaternion division algebra with center $F$ and $\widetilde{{\rm GL}_{2}(E)}$ is a certain two fold cover of ${\rm GL}_{2}(E)$.

Let us briefly give an outline of the organization of the paper. In Section \ref{G_n ans chi_n}, we recall the definition of the subgroups $G_{n}$ and state some properties of the character $\chi_{n}$. In Section \ref{covering_groups}, we recall splitting of the covering groups over $G_{n}$ and describe an appropriate choice of the splitting over the subgroup $G_{n}$ for large $n$. In Section \ref{degenerate_W_forms} we give the definition of the space of degenerate Whittaker forms and describe important set up to prove the main theorem. In Section \ref{proof of main theorem}, we transfer some results from linear groups to covering groups in a few lemmas and based on these lemmas we prove the main theorem. \\

{\bf Acknowledgements:}  Author would like to express his gratitude to Professor D. Prasad and Professor Sandeep Varma for their numerous help and suggestions at various points. Without their help and continuous encouragement this paper would not have been possible.

\section{Subgroups $G_{n}$ and characters $\chi_{n}$} \label{G_n ans chi_n}
In this section, we recall a certain sequence of subgroups $G_{n}$ of $G$, which form a basis of neighbourhoods at identity and certain characters $\chi_{n} : G_{n} \longrightarrow \C^{\times}$. Although the objects involved in this section were defined for linear groups in \cite{MW87, San14}, we will lift them to covering groups in a suitable way in Section \ref{covering_groups} and work with these lifts in this paper. \\

Let $\mathfrak{O}_{E}$ denotes the ring of integers in $E$. We fix an additive character $\psi$ of $E$ with conductor $\mathfrak{O}_{E}$. Fix an $\Ad(G)$-invariant non-degenerate symmetric bilinear form $B : \g \times \g \longrightarrow E$. \\

Let $Y$ be a nilpotent element in $\mathfrak{g}$. Choose a one parameter subgroup $\varphi : \mathbb{G}_{m} \longrightarrow {\bf G}$ satisfying
\begin{equation} \label{property:1}
\Ad(\varphi(s))Y = s^{-2}Y, \forall s \in \mathbb{G}_{m}.
\end{equation}
Existence of such a $\varphi$ is known from the theory of $\mathfrak{sl}_{2}$-triplets. But there are examples which do not come from this theory. We note that for a given nilpotent element $Y \in \g$ the existence of $\varphi$ is guaranteed by the theory of $\mathfrak{sl}_{2}$-triplets.\\
For $i \in \Z$, define
\[
\bg_{i} = \{ X \in \bg : \Ad(\varphi(s))X = s^{i}X, \forall s \in \mathbb{G}_{m} \}.
\]
Set 
\[
\boldsymbol{\mathfrak{n}}:=\boldsymbol{\mathfrak{n}}^{+}:= \oplus_{i >0} \bg_{i}, \boldsymbol{\mathfrak{n}}^{-}:= \oplus_{i<0} \bg_{i}, \boldsymbol{\mathfrak{p}}^{-}:= \oplus_{i \leq 0} \bg_{i}.
\]
The parabolic subgroup ${\bf P}^{-}$ of ${\bf G}$ stabilizing $\boldsymbol{\mathfrak{n}}^{-}$ has $\boldsymbol{\mathfrak{p}}^{-}$ as its Lie algebra. Let ${\bf N}= {\bf N}^{+}$ be the unipotent subgroup of ${\bf G}$ having the Lie algebra $\boldsymbol{\mathfrak{n}}$.\\

Let $G(Y)$ be the centralizer of $Y$ in $G$ and $Y^{\#}$  the centralizer of $Y$ in $\g$. The $G$-orbit $\mathcal{O}_{Y}$ of $Y$ can be identified with $G/G(Y)$ and therefore its tangent space at $Y$ can be identified with $\g/Y^{\#}$. Note that 
\[
\begin{array}{lcl}
Y^{\#} &=& \{ X \in \g : [X, Y]=0 \} \\
           &=& \{ X \in \g : B([X,Y], Z) = 0, \forall Z \in \g \} \\
           &=& \{ X \in \g : B(Y, [X,Z])=0, \forall Z \in \g \}.
\end{array}           
\] 
The bilinear form $B$ induces a non-degenerate alternating form $B_{Y} : \g/Y^{\#} \times \g/Y^{\#} \longrightarrow E$ defined by $B_{Y}(X_{1}, X_{2})=B(Y, [X_{1}, X_{2}])$. \\

Let $L \subset \mathfrak{g}$ be a lattice satisfying the following conditions:
\begin{enumerate}
\item $[L, L] \subset L$,
\item $L = \oplus_{i \in \Z} L_{i}$, where $L_{i}=L \cap \g_{i}$,
\item The lattice $L/L_{Y}$, where $L_{Y} = L \cap Y^{\#}$, is self dual (i.e. $(L/L_{Y})^{\perp} = L/L_{Y}$) with respect to $B_{Y}$. (For any vector space $V$ with a non-degenerate bilinear form $B$ and a lattice $M$ in $V$, $M^{\perp} := \{ X \in V : B(X, Y) \in \mathfrak{O}_{E}, \forall Y \in V \}$.)
\end{enumerate}
A lattice $L$ satisfying the above properties can be chosen by taking a suitable basis of all $\g_{i}$'s, see \cite{MW87}. Now we summarize a few well known properties of the exponential map, and use them to define subgroups $G_{n}$ and their Iwahori decompositions.
\begin{lemma} \label{subgroup G_n}
\begin{enumerate}
\item There exists a positive integer $A$ such that $\exp$ is defined and injective on $\varpi^{A}L$, with inverse $\log$. 
\item The $\exp$ map on $\varpi^{n}L$ is homeomorphic onto its image image $G_{n}:=\exp(\varpi^{n}L)$, which is an open subgroup of $G$ for all $n \geq A$. 
\item Set $P_{n}^{-} = \exp(\varpi^{n}L \cap \mathfrak{p}^{-})$ and $N_{n} = \exp(\varpi^{n}L \cap \mathfrak{n})$. Then we have an Iwahori factorization
\[
G_{n} = P_{n}^{-}N_{n}.
\]
\end{enumerate}
\end{lemma}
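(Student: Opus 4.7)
The plan is to derive all three parts from standard convergence properties of the exponential and logarithm power series on a $p$-adic Lie algebra, together with the Campbell--Baker--Hausdorff (CBH) formula, once one goes deep enough into the lattice $L$. The statements are essentially contained in \cite{MW87} (and refined in \cite{San14} to accommodate $p=2$); the argument I have in mind mirrors that treatment.

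For part (1), after embedding ${\bf G}\hookrightarrow\mathrm{GL}_m$ via a faithful representation, $\exp$ and $\log$ become matrix power series whose denominators are factorials. A valuation estimate depending only on the residual characteristic and the ramification index of $E$ shows that both series converge absolutely on $\varpi^A L$ for $A$ large enough, and the formal identities $\log\circ\exp=\mathrm{id}$ and $\exp\circ\log=\mathrm{id}$ remain valid on this domain. Part (2) then follows: the hypothesis $[L,L]\subset L$ together with CBH gives $\exp(X)\exp(Y)=\exp(X\ast Y)$ with $X\ast Y\in\varpi^n L$ whenever $X,Y\in\varpi^n L$ and $n\ge A$, making $G_n$ a subgroup. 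Since $\exp$ is continuous and injective on the open subset $\varpi^n L\subset\g$ with continuous inverse $\log$, it is a homeomorphism onto $G_n$, and $G_n$ is open in $G$ because $\exp$ is a local homeomorphism at the identity of the $p$-adic analytic group.

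For part (3), the grading $L=\oplus_{i\in\Z} L_i$ gives a direct sum decomposition $\varpi^n L = (\varpi^n L\cap\mathfrak{p}^-)\oplus(\varpi^n L\cap\mathfrak{n})$. Given $Z=X+Y$ in this decomposition with $X\in\varpi^n L\cap\mathfrak{p}^-$ and $Y\in\varpi^n L\cap\mathfrak{n}$, I would use CBH iteratively to produce $X'\in\varpi^n L\cap\mathfrak{p}^-$ and $Y'\in\varpi^n L\cap\mathfrak{n}$ with $\exp(Z)=\exp(X')\exp(Y')$, exploiting that the correction terms lie in $[\varpi^n L,\varpi^n L]\subset\varpi^{2n}L$ so that the iteration contracts in the $\varpi$-adic topology. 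Uniqueness of the factorization is immediate from $\mathfrak{p}^-\cap\mathfrak{n}=0$ and injectivity of $\exp$ on $\varpi^A L$.

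The main obstacle I anticipate is keeping the iteration in (3) clean at residue characteristic $2$: CBH denominators force $A$ to be taken larger, and one must carefully verify that the iterates remain inside $\varpi^n L$ rather than merely in $\varpi^A L$. This bookkeeping is precisely what \cite{MW87} and \cite{San14} carry out in their respective settings, and I would import their estimates rather than redo them from scratch.
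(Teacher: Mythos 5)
Your proposal is correct and follows the same route as the paper, which states these facts without proof as well-known properties of the exponential map, deferring to \cite{MW87} and \cite{San14} for the convergence estimates, the Campbell--Hausdorff argument for the group structure of $G_n$, and the Iwahori factorization coming from the grading $L=\oplus_i L_i$. Your sketch (including the contraction $[\varpi^n L,\varpi^n L]\subset\varpi^{2n}L$ driving the iterative factorization and the caveat about denominators when $p=2$) is exactly the standard argument those references carry out.
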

We will be working with a certain character $\chi_{n}$ of $G_{n}$, which we recall in the next lemma. 
\begin{lemma} \label{character_chi_n}
For large values of $n$ there exists a character $\chi_{n}$ of $G_{n}$, whose restriction to $\exp((Y^{\#} \cap \varpi^{n}L)+\varpi^{n + \val 2}L)$ coincides with $\gamma \mapsto \psi(B(\varpi^{-2n}Y, \log \gamma))$. If $P_{n}^{-}$ is as in Lemma \ref{subgroup G_n}, the character $\chi_{n}$ can be chosen so that 
\[
\chi_{n}(p)=1, \forall p \in P_{n}^{-}.
\]
\end{lemma}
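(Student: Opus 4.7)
The plan is to construct $\chi_n$ by following the existing constructions in the literature: for $p$ odd, the explicit character of M{\oe}glin-Waldspurger \cite{MW87} defined via the Baker-Campbell-Hausdorff formula; for $p = 2$, Varma's Kirillov-theoretic refinement \cite{San14}. In either case, both asserted properties (the restriction formula on $\exp((Y^\# \cap \varpi^n L) + \varpi^{n+\val 2}L)$ and triviality on $P_n^-$) need to be verified once the character is in hand.

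When $p$ is odd, I would define
\begin{equation*}
\chi_n(\gamma) := \psi(B(\varpi^{-2n}Y, \log \gamma)), \qquad \gamma \in G_n,
\end{equation*}
on all of $G_n$. To verify that this is a character, one applies the Baker-Campbell-Hausdorff expansion of $\log(\gamma_1 \gamma_2)$ together with the $\Ad$-invariance identity $B(Y, [X_1, X_2]) = B([X_2, Y], X_1)$. Because $[L, L] \subset L$ and $\psi$ has conductor $\mathfrak{O}_E$, the nested commutator corrections (which in odd residual characteristic carry denominators coprime to $p$) land in the kernel of $\psi(B(\varpi^{-2n}Y, \cdot))$ for $n$ sufficiently large. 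The restriction formula is then tautological, and triviality on $P_n^-$ follows from weight considerations: $Y \in \bg_{-2}$ and the invariant pairing $B$ vanishes on $\bg_i \times \bg_j$ unless $i + j = 0$, while $\log P_n^- \subset \oplus_{i \leq 0} \varpi^n L_i$ contains no $\bg_2$-component.

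For $p = 2$, the $\tfrac{1}{2}$ and higher $2$-adic denominators in Baker-Campbell-Hausdorff prevent the direct definition from producing a character. Following Varma, I would instead invoke Kirillov's orbit method for the compact pro-$p$ group $G_n$: the alternating form $B_Y$ on $\varpi^n L / (\varpi^n L \cap Y^\#)$ is non-degenerate by hypothesis (3) on the lattice $L$, and $(\varpi^n L \cap \mathfrak{p}^-)$ projects to a Lagrangian. Inducing the corresponding character from the polarizing subgroup $P_n^-$ to $G_n$ produces a $\chi_n$ that is trivial on $P_n^-$ and whose restriction to $\exp((Y^\# \cap \varpi^n L) + \varpi^{n + \val 2}L)$ is given by the prescribed formula; the threshold $\varpi^{n + \val 2}L$ is precisely where the $2$-adic denominators from Baker-Campbell-Hausdorff become harmless under pairing with $\varpi^{-2n}Y$. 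The principal obstacle is the valuation bookkeeping in this $p = 2$ case, which is the main technical content of \cite{San14}; I would cite that reference after verifying that the lattice $L$ fixed here satisfies Varma's hypotheses.
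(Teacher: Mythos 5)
The paper offers no proof of this lemma at all --- it is quoted from Lemma 5 of \cite{San14} (with \cite{MW87} behind the $p \neq 2$ case) --- so your plan of reconstructing those constructions and then citing them is in the same spirit. Your $p \neq 2$ argument reaches the right conclusion but by a reasoning that does not hold for the critical term: the degree-two Campbell--Hausdorff correction $\tfrac{1}{2}B(\varpi^{-2n}Y,[X_1,X_2])$ with $X_i \in \varpi^n L$ does \emph{not} fall into the kernel of $\psi$ as $n$ grows. Writing $X_i = \varpi^n Z_i$ with $Z_i \in L$, this term equals $\tfrac{1}{2}B_Y(Z_1,Z_2)$, an $n$-independent quantity; it lies in $\tfrac{1}{2}\mathfrak{O}_E$ only because condition (3) on $L$ (self-duality of $L/L_Y$) forces $B_Y(L,L) \subset \mathfrak{O}_E$, and it is killed by $\psi$ only because $2$ is a unit. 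Your ``for $n$ sufficiently large'' reasoning, and the appeal to $[L,L]\subset L$ alone, is valid only for the degree $\geq 3$ terms, which genuinely gain positive powers of $\varpi$. Without invoking condition (3) the quadratic term need not even be integral.

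The $p=2$ case is where the content of the lemma lies, and your description of the construction would not work as stated. Inducing a character from $P_n^-$ to $G_n$ produces a representation of dimension $[G_n : P_n^-] > 1$, not a character; and $\varpi^n L \cap \mathfrak{p}^-$ does not in general project to a Lagrangian for $B_Y$, since $\Ad$-invariance of $B$ shows $B_Y$ pairs $\g_i$ with $\g_{2-i}$, so the block $\g_1/(\g_1 \cap Y^{\#})$ is a symplectic subspace paired with itself and is entirely missed by $\mathfrak{p}^-$ --- this failure is exactly why the Heisenberg group $H$ and the extended character $\tilde{\chi}$ of $H_0$ appear in Section \ref{degenerate_W_forms}. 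What is actually done (Equation \ref{chi_n is character} here, Lemmas 5--6 of \cite{San14}) is to define $\chi_n$ on the Iwahori factorization $G_n = P_n^- N_n$ by $\chi_n(pv) = \tilde{\chi}(t^{-n}vt^{n})$ and then prove, via Kirillov theory for the pro-$p$ group $G_n$, that this recipe is multiplicative and restricts as claimed on $\exp((Y^{\#}\cap\varpi^nL)+\varpi^{n+\val 2}L)$. That multiplicativity is the statement you would need to verify (or correctly attribute) before closing the argument; as written, your reduction sets up a different and unworkable construction.
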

For a proof of this lemma and other properties of this character $\chi_{n}$ see Lemma 5 in \cite{San14}.

\begin{remark} \normalfont
If $p \neq 2$, then the map $\gamma \mapsto \psi(B(\varpi^{-2n}Y, \log \gamma))$ itself defines a character of $G_{n}$ for large $n$ and satisfies the properties stated in Lemma \ref{character_chi_n}. But for $p=2$, there are more that one characters $\chi_{n}$, for more details see \cite{San14}.
\end{remark}

\section{Covering groups} \label{covering_groups}
Let $\mu_{r}$ be the group of $r$-th roots of unity in $\C$. Consider an $r$-fold covering $\tilde{G}$ of $G$, which is a central extension of locally compact groups of the group $G$ by $\mu_{r}$ giving rise to the following short exact sequence:
\[
1 \longrightarrow \mu_{r} \longrightarrow \tilde{G} \longrightarrow G \longrightarrow 1.
\]
\begin{lemma} \label{splitting of covering}
\begin{enumerate}
\item The covering $\tilde{G} \longrightarrow G$ splits uniquely over any unipotent subgroup of $G$.
\item For large enough $n$ the covering $\tilde{G} \longrightarrow G$ splits over $G_{n}$. Moreover, there is a splitting $s$ of $\tilde{G} \longrightarrow G$ restricted to $\cup_{g \in G} g G_{n} g^{-1}$ such that $s(hth^{-1}) = h s(t) h^{-1}$ for all $h \in G$.
\end{enumerate}
\end{lemma}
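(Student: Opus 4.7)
For (1), the plan is to invoke the standard argument of M\oe glin--Waldspurger in \cite{MW95}: any unipotent $E$-subgroup $U \subset G$ is a successive extension of copies of $(E,+)$, and since $E$ has characteristic zero it is a $\mathbb{Q}$-vector space, hence uniquely $n$-divisible for every $n \geq 1$. Consequently $E$ admits no open subgroup of finite index and therefore no nontrivial continuous homomorphism into the finite group $\mu_r$; by devissage along the lower central series of $U$ the same holds for $U$. Any two continuous splittings of $\tilde{G} \to G$ over $U$ then differ by a continuous homomorphism $U \to \mu_r$, so they coincide; existence is obtained in parallel by trivializing the relevant $2$-cocycle step by step along the same filtration.

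For (2), I would first produce a splitting on $G_n$ for large $n$. For $n \geq A$, the map $\exp$ identifies $G_n$ with the lattice $\varpi^n L$, and after enlarging $n$ one arranges that the Baker--Campbell--Hausdorff series converges on $\varpi^n L$ and that the $r$-th power map of $G_n$ corresponds, via $\exp$, to multiplication by $r$ on $\varpi^n L$, which is surjective on the lattice (since $L$ is an $\mathfrak{O}_E$-module and $r \cdot \varpi^n L$ has finite index that can be absorbed by enlarging $n$). This rules out nontrivial continuous characters $G_n \to \mu_r$ and gives uniqueness of the splitting; existence is the standard splitting result over small enough compact open subgroups, which I would quote from \cite{MW95}. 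Call this splitting $s_n : G_n \to \tilde{G}$.

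To extend $s_n$ $G$-equivariantly to $\bigcup_{g \in G} g G_n g^{-1}$, define
\[
s(h t h^{-1}) := \tilde{h}\, s_n(t)\, \tilde{h}^{-1} \qquad (t \in G_n,\ h \in G),
\]
for any lift $\tilde{h}$ of $h$; centrality of $\mu_r$ makes this independent of the chosen lift. Well-definedness reduces to the claim that whenever $k t_1 k^{-1} = t_2$ with $t_1, t_2 \in G_n$ and $k \in G$, one has $\tilde{k}\, s_n(t_1)\, \tilde{k}^{-1} = s_n(t_2)$. For this I would observe that $t \mapsto \tilde{k}\, s_n(k^{-1} t k)\, \tilde{k}^{-1}$ is a continuous splitting of $\tilde{G} \to G$ over the open subgroup $k G_n k^{-1} \cap G_n$ of $G_n$; applying the same divisibility argument to this subgroup (which remains pro-$p$ and retains the surjectivity of its $r$-th power map) forces it to agree with the restriction of $s_n$, giving the desired identity. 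The $G$-equivariance $s(h t h^{-1}) = h s(t) h^{-1}$ is then built in by construction. The hardest step is the existence of $s_n$ when $r$ shares a factor with the residual characteristic $p$; there the cleanest route is to cite \cite{MW95} rather than redo the continuous $H^2$ computation by hand.
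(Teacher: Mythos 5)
Part (1) is fine and matches the route the paper takes (it simply cites \cite{MW95}, and \cite{WWLi14} for the characteristic-zero case); your divisibility/devissage sketch is the standard argument behind those references.

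Part (2) contains a genuine error at the crucial step. You claim that, after enlarging $n$, the $r$-th power map on $G_{n}$ becomes surjective because multiplication by $r$ on $\varpi^{n}L$ is "surjective on the lattice". It is not: the image of multiplication by $r$ on $\varpi^{n}L$ is $r\varpi^{n}L = \varpi^{n+\val(r)}L$, which is a \emph{proper} sublattice of the same index for every $n$ whenever $p \mid r$; enlarging $n$ changes nothing. Consequently $G_{n}/G_{n}^{r}$ is a nontrivial finite abelian $p$-group in that case, $G_{n}$ admits nontrivial characters into $\mu_{r}$, and the splitting over $G_{n}$ is \emph{not} unique. (The paper makes exactly this point in the remark following the lemma: uniqueness holds only when $(r,p)=1$.) Since your well-definedness argument for the equivariant extension — comparing $s_{n}$ with $t \mapsto \tilde{k}\, s_{n}(k^{-1}tk)\, \tilde{k}^{-1}$ on $kG_{n}k^{-1} \cap G_{n}$ — rests entirely on this false uniqueness, the construction of $s$ on $\bigcup_{g} gG_{n}g^{-1}$ is not justified in the only hard case, namely $p \mid r$.

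The paper's proof repairs precisely this point. Fix $m$ with a splitting over $G_{m}$; although the splitting over $G_{m}$ is not unique, any two splittings differ by a character $G_{m} \to \mu_{r}$ and hence \emph{agree on} $G_{m}^{r} = \exp(r\varpi^{m}L)$, where the splitting is forced to send $x = u^{r}$ to $\tilde{u}^{r}$ for any lift $\tilde{u}$ of any $r$-th root $u$ in $G_{m}$. One then takes $n$ large enough that $G_{n} \subset G_{m}^{r}$ and defines $s$ on $gG_{n}g^{-1}$ by conjugating this canonical splitting; the compatibility on overlaps $gG_{m}^{r}g^{-1} \cap hG_{m}^{r}h^{-1} \subset gG_{m}g^{-1} \cap hG_{m}h^{-1}$ comes from the same "agree on $r$-th powers" principle, not from uniqueness over $G_{n}$. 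If you replace your uniqueness claim by this passage to $G_{m}^{r}$, the rest of your outline goes through.
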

\begin{proof}
\begin{enumerate}
\item This is well known, see \cite{MW95}. For a simpler proof, in the case when $E$ has characteristic zero, see Section 2.2 of \cite{WWLi14}.
\item Recall that the subgroups $G_{n}$ form a basis of neighbourhoods of identity. It is well known that the covering $\tilde{G} \longrightarrow G$ splits over a neighbourhood of identity. Therefore for large enough $n$, the covering splits over $G_{n}$. There are more than on possible splitting of the cover $\tilde{G} \longrightarrow G$ over $G_{n}$.  If a splitting is fixed, then any other splitting over $G_{n}$ will differ from the above splitting by a character $G_{n} \longrightarrow \mu_{r}$. \\
Fix some $m$ such that the covering splits over $G_{m} = \exp(\varpi^{m}L)$. As mentioned above, any two splitting over the subgroup $G_{m}$ will differ by a character $G_{m} \longrightarrow \mu_{r}$ and any such character is trivial over
\[
G_{m}^{r} := \{ g^{r} : g \in G_{m} \}.
\] 
Hence all the possible splittings over $G_{m}$ agree on $G_{m}^{r}$. The subset $G_{m}^{r}$ is a subgroup of $G_{m}$ as it equals $\exp(r \cdot \varpi^{m} L)$.   Let $g, h \in G$. We have
\[
(g G_{m} g^{-1} \cap h G_{m}h^{-1}) \supset (g G_{m}^{r} g^{-1} \cap h G_{m}^{r} h^{-1}).
\]
This implies that any two splittings of $\tilde{G} \longrightarrow G$ restricted to $g G_{m}^{r} g^{-1} \cap h G_{m}^{r} h^{-1}$ one coming from the restriction of a splitting of $\tilde{G} \longrightarrow G$ over $g G_{m} g^{-1}$ and the other coming from the restriction of a splitting over $h G_{m} h^{-1}$ are the same.. Now choose $A'$ so large such that $G_{n} \subset G_{m}^{r}$ for $n \geq A'$. We fix the  splitting of $G_{n}$ which comes from that of the restriction of $G_{m}^{r}$. This gives us a splitting over $\cup_{g \in G} g G_{n} g^{-1}$. \qedhere
\end{enumerate}
\end{proof}
Using this splitting we get that exponential map is defined from a small enough neighbourhood of $\g$ to $\tilde{G}$, namely the usual exponential map composed with this splitting, which one can use to define the character expansion of an irreducible admissible genuine representation $(\pi, W)$ of $\tilde{G}$, which has been done by Wen-Wei Li in \cite{WWLi}.
\begin{remark} \normalfont
If $r$ is co-prime to $p$, then as $G_{n}$ is a pro-$p$ group and $(r,p)=1$, there is no non-trivial character from $G_{n}$ to $\mu_{r}$. In that situation, the splitting in the above lemma is unique.
\end{remark}
From now onwards, for large enough $n$, we treat $G_{n}$ not only as a subgroup of $G$ but also as one of $\tilde{G}$, with the above specified splitting. In other words, for the covering group $\tilde{G}$ (as in the linear case) we have a sequence of pairs $(G_{n}, \chi_{n})$ using the splitting specified above which satisfies the properties described in Section 2.
\begin{definition} \label{phi tilde}
\normalfont
 Let $H \subset G$ be an open subgroup and $s : H \hookrightarrow \tilde{G}$ be a splitting. Then for any $\phi \in C_{c}^{\infty}(G)$ with ${\rm supp}(\phi) \subset H$ define $\tilde{\phi}_{s} \in C_{c}^{\infty}(\tilde{G})$  as follows:
\[
 \tilde{\phi}_{s}(g) := \left\{ \begin{array}{ll}
                           \phi(g'), & \text{ if } g=s(g') \in s(H) \\
                           0, & \text{ if } g \in \tilde{G} \backslash s(H)
                           \end{array}
                          \right.
\]
\end{definition}
Note that this definition depends upon the choice of splitting. Whenever the splitting is clear in the context or it has been fixed and there is no confusion we write just $\tilde{\phi}$ instead of $\tilde{\phi}_{s}$ and $H$ for $s(H)$. Recall that the convolution $\phi \ast \phi'$ for $\phi, \phi' \in C_{c}^{\infty}(G)$ is defined by
\[
 \phi \ast \phi' (x) = \int_{G} \phi(xy^{-1}) \phi'(y) \, dy.
\]
Observe that 
\[
\supp(\phi \ast \phi') \subset \supp(\phi) \cdot \supp(\phi'),
\]
which implies the lemma below.
\begin{lemma} \label{convolution}
Let $H$ be an open subgroup of $G$ such that the covering $\tilde{G} \rightarrow G$ has a splitting over $H$, say, $s : H \hookrightarrow \tilde{G}$, satisfying $s(xy)=s(x)s(y)$ whenever $x, y$ are in $H$. If $\phi, \phi' \in C_{c}^{\infty}(G)$ are such that supp($\phi$) and supp($\phi'$) are contained in $H$, then we have 
\[
 \widetilde{\phi \ast \phi'} =\tilde{\phi} \ast \tilde{\phi'}.
\]
\end{lemma}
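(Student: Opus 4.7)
The plan is to verify the identity by first checking that both sides have support contained in $s(H)$, and then comparing their values pointwise on $s(H)$ by unwinding the definitions. The essential algebraic input is that $s$ restricts to a group homomorphism on $H$; the essential analytic input is the standard normalisation of Haar measure on $\tilde{G}$ making $s : H \to s(H)$ measure-preserving (equivalently, $\mu_{r}$ carries counting measure in the fibration $\tilde{G} \to G$).

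First I would check the support containment. For the left-hand side, the observation displayed just before the lemma gives $\supp(\phi \ast \phi') \subset \supp(\phi)\cdot \supp(\phi') \subset H\cdot H = H$, so by Definition \ref{phi tilde} the function $\widetilde{\phi \ast \phi'}$ vanishes outside $s(H)$. For the right-hand side, $\supp(\tilde{\phi} \ast \tilde{\phi'}) \subset \supp(\tilde{\phi})\cdot \supp(\tilde{\phi'}) \subset s(H)\cdot s(H)$, and the multiplicativity $s(x)s(y) = s(xy)$ on $H$ shows $s(H)\cdot s(H) = s(H)$.

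Second, I would fix $g \in H$ and compute both sides at $s(g)$. By Definition \ref{phi tilde} and the support considerations above,
\[
\widetilde{\phi \ast \phi'}(s(g)) = (\phi \ast \phi')(g) = \int_{H} \phi(gy^{-1})\,\phi'(y)\, dy.
\]
On the other hand,
\[
(\tilde{\phi} \ast \tilde{\phi'})(s(g)) = \int_{\tilde{G}} \tilde{\phi}(s(g)\, h^{-1})\,\tilde{\phi'}(h)\, dh,
\]
and the integrand vanishes unless $h \in s(H)$. Parametrising $h = s(y)$ with $y \in H$ and using the measure-compatibility of the splitting $s$, this rewrites as $\int_{H} \tilde{\phi}(s(g)\,s(y)^{-1})\,\tilde{\phi'}(s(y))\, dy$. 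The homomorphism property gives $s(g)s(y)^{-1} = s(gy^{-1})$, and Definition \ref{phi tilde} then identifies $\tilde{\phi}(s(gy^{-1})) = \phi(gy^{-1})$ and $\tilde{\phi'}(s(y)) = \phi'(y)$, yielding precisely the integral on the previous line.

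I do not expect any serious obstacle: the only delicate point is being explicit about the normalisation of Haar measure on $\tilde{G}$ relative to that on $G$, and once this standard convention is in place the lemma reduces to a direct substitution using the multiplicativity of $s$ on $H$ together with the definition of the tilde construction.
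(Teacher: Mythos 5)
Your proof is correct and follows essentially the same route as the paper, which simply records the support observation $\supp(\phi \ast \phi') \subset \supp(\phi)\cdot\supp(\phi')$ and leaves the pointwise verification implicit; you fill in that routine computation using the multiplicativity of $s$. Your explicit remark about normalising the Haar measure on $\tilde{G}$ so that $s$ is measure-preserving is a fair point of care that the paper glosses over, but it is the standard convention and does not change the argument.
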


\section{Degenerate Whittaker forms} \label{degenerate_W_forms}
In this section we give the definition of degenerate Whittaker forms for a smooth genuine representation $\pi$ of $\tilde{G}$. This is an adaptation of Section I.7 of \cite{MW87} and Section 5 of \cite{San14}. \\

Define $N := \exp(\mathfrak{n}) = \exp(\oplus_{i \geq 1} \g_{i}))$, $N^{2} := \exp(\oplus_{i \geq 2} \g_{i})$ and $N' := \exp(\mathfrak{g}_{1} \cap Y^{\#})N^{2}$. It is easy to see that $N^{2}$ and $N'$ are normal subgroups of $N$. Let $H$ be the Heisenberg group defined with $\mathfrak{g}_{1}/(\mathfrak{g}_{1} \cap Y^{\#}) \times E$ as underlying set using the symplectic form induced by $B_{Y}$, i.e. for $X, Z \in \g_{1}/(\g_{1} \cap Y^{\#})$ and $a, b \in E$,
\begin{equation} \label{definition: H}
(X, a)(Z, b) = (X+Z, a+b+\frac{1}{2}B_{Y}(X,Z)).
\end{equation}  
Consider the map $N \longrightarrow H$ given by
\[
\exp(X) \mapsto (\bar{X}, B(Y, X)),
\]
where $\bar{X}$ is the image of the $\mathfrak{g}_{1}$ component of $X$ in $\mathfrak{g}_{1}/(\mathfrak{g}_{1} \cap Y^{\#})$. The Campbell-Hausdorff formula implies that the above map is a homomorphism with the following kernel 
\[
N'' = \{ n \in N' : B(Y, \log n)=0 \}.
\] 
Let $\chi : N' \longrightarrow \C^{\times}$ be defined by $\gamma \mapsto \psi \circ B(Y, \log \gamma)$. Note that $\gamma \mapsto B(Y, \log \gamma) \in E \cong \{0\} \times E \subset H$ induces an isomorphism $N'/N'' \cong E$.	\\

We note that the cover $\tilde{G} \longrightarrow G$ splits uniquely over the subgroups $N, N'$ and $N''$. We denote the images of these splittings inside $\tilde{G}$ by the same letters. For a smooth genuine representation $(\pi, W)$ of $\tilde{G}$ we define
\[
N^{2}_{\chi}W= \{ \pi(n)w - \chi(n)w : w \in W, n \in N^{2} \}
\]
and 
\[
N_{\chi}'W= \{ \pi(n)w - \chi(n)w : w \in W, n \in N' \}.
\]
Note that $N$ normalizes $\chi$, therefore $H=N/N''$ acts on $W/N_{\chi}'W$ in a natural way. This action restricts to $N'/N''$ ( the center of $N/N''$) as multiplication by the character $\chi$. Let $\mathcal{S}$ be the unique irreducible representation of the Heisenberg group $H$ with central character $\chi$. 
\begin{definition} \normalfont
Define the space of degenerate Whittaker forms for $(\pi, W)$ associated to $(Y, \varphi)$ to be 
\[
\mathcal{W} := \Hom_{H}( \mathcal{S}, W/N_{\chi}'W).
\]
\end{definition}
\begin{remark} \normalfont
If $\g_{1} =0$, then $N = N' = N^{2}$. In this case, $\mathcal{W} \cong W/N_{\chi}W$ is the $(N, \chi)$-twisted Jacquet functor.  
\end{remark}
\begin{definition} \normalfont
For a smooth representation $(\pi,W)$ of $\tilde{G}$ define $\mathcal{N}_{\Wh}(\pi)$ to be the set of nilpotent orbits $\mathcal{O}$ of $\mathfrak{g}$ such that there exists $Y \in \mathcal{O}$ and $\varphi$ as in Equation \ref{property:1}, such that the space of degenerate Whittaker forms for $\pi$ associated to $(Y, \varphi)$ is non-zero.
\end{definition}
As $\g_{1}/\g_{1} \cap Y^{\#}$ is a symplectic vector space and $L/L_{Y}$ is self dual, it follows that $L_{H} := (L \cap \g_{1})/(L \cap \g_{1} \cap Y^{\#})$ is a self dual lattice in the symplectic vector space $H/Z(H) \cong \g_{1}/(\g_{1} \cap Y^{\#})$. \\

Recall the definition of the Heisenberg group $H$ (see Equation \ref{definition: H}) and as $\psi$ is trivial on $\mathfrak{O}_{E}$, it follows that one can extend the character $\psi$ of $E \cong Z(H)$ to a character of the inverse image of $2L_{H}$ under $H \longrightarrow \g_{1} / (\g_{1} \cap Y^{\#})$ by defining it to be trivial on $2L_{H} \times \{0\} \subset H$. From Lemma 4 in \cite{San14}, this character can be extended to a character $\tilde{\chi}$ on the inverse image $H_{0}$ of $L_{H}$ under the natural map $H \longrightarrow \g_{1}/(\g_{1} \cap Y^{\#})$.\\

\begin{remark} \normalfont
There are one parameter subgroups $\varphi$ which do not arise from $\mathfrak{sl}_{2}$-triplets. If $\varphi$ arises from  $\mathfrak{sl}_{2}$-triplets, then it is easy to see that $Y^{\#} \subset \oplus_{i \leq 0} \g_{i}$. In particular we have $\g_{1} \cap Y^{\#} = \{ 0 \}$ and hence the Heisenberg group $H = \g_{1} \times E$.
\end{remark}

Then, by Chapter 2, Section I.3 of \cite{MVW}, one knows that $\mathcal{S} = {\rm ind}_{H_{0}}^{H} \tilde{\chi}$, induction with compact support. Since $H_{0}$ is an open subgroup of the locally profinite group $H$, we have the following form of the Frobenius reciprocity law:
\[
\Hom_{H}(\mathcal{S}, \tau) = \Hom_{H}({\rm ind}_{H_{0}}^{H} \tilde{\chi}, \tau) = \Hom_{H_{0}}(\tilde{\chi}, \tau\mid _{H_{0}})
\] 
for any smooth representation $\tau$ of $H$. Thus, in the category of representations of $N$ on which $N'$ acts via the character $\chi$, the functor $\Hom_{H}(\mathcal{S}, -)$ amounts to taking the $\tilde{\chi}\mid _{H_{0}}$-isotypic component. Since $H_{0}$ is compact modulo the center, this functor is exact. Thus we have
\[
\mathcal{W} = \Hom_{H}(\mathcal{S}, W/N_{\chi}'W) \cong (W/N_{\chi}'W)^{(H_{0}, \tilde{\chi})}.
\]
where $(W/N_{\chi}'W)^{(H_{0}, \tilde{\chi})}$ denotes the $(H_{0}, \tilde{\chi})$-isotypic component of $W/N_{\chi}'W$. \\

Recall that we have defined certain characters $\chi_{n}$'s in Section \ref{G_n ans chi_n} and now we have a character $\tilde{\chi}$. We need to choose them in a compatible way. First we fix a character $\tilde{\chi}$ and consider it as a character of $\exp(\g_{1} \cap L)N'$ in the obvious way (as $\exp(\g_{1} \cap L)N'$ is the inverse image of $H_{0}$ under $N \longrightarrow H$). Let $t:=\varphi(\varpi) \in G$. Let $\tilde{t} \in \tilde{G}$ be any lift of $t$ in $\tilde{G}$. Let 
\[
G_{n}' = \Int({\tilde{t}}^{-n})(G_{n}), P_{n}'= \Int({\tilde{t}}^{-n})(P_{n}^{-}) \, {\rm and} \, V_{n}'= \Int({\tilde{t}}^{-n})(N_{n}).
\] 
It can be easily verified that $V_{n}'$ contains $\exp(\g_{1} \cap L)$. We also have $V_{n}' \subset V_{m}'$ for large $m, n$ with $n \leq m$. Moreover
\[
\exp(\g_{1} \cap L)N^{2} = \bigcup_{n \geq 0} V_{n}'.
\] 
It can also be verified easily that $\tilde{\chi} \circ \Int({\tilde{t}}^{-n})$ restricts to a character of $N_{n}$ that extends the character on $N_{n+ \val 2}N_{n}'$ given by $\gamma \mapsto \psi(B(\varpi^{-2n}Y, \log \gamma))$. Now define 
\begin{equation} \label{chi_n is character}
\chi_{n}(pv) = \tilde{\chi}(\tilde{t}^{-n}v \tilde{t}^{n}), \forall p \in P_{n}^{-} \text{ and } \forall v \in V_{n}'.
\end{equation}
\begin{lemma} [Lemma 6 in \cite{San14}]
Let $\chi_{n}$ be as defined in Equation \ref{chi_n is character}. Then $\chi_{n}$ is a character of $G_{n}$ and satisfies the properties stated in Lemma \ref{character_chi_n}. 
\end{lemma}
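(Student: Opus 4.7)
The strategy is to reduce everything to the linear-group statement (Lemma 6 of \cite{San14}) by exploiting the compatibility between the splitting $s : G_n \hookrightarrow \tilde G$ fixed in Lemma \ref{splitting of covering}(2) and the unique splitting of the cover over any unipotent subgroup (Lemma \ref{splitting of covering}(1)).

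First I would check that the defining formula $\chi_n(pv) = \tilde\chi(\tilde t^{-n} v \tilde t^n)$ is unambiguous. The Iwahori decomposition $G_n = P_n^- N_n$ provides a unique expression $g = pv$ for each $g \in G_n$, so the candidate value is well defined once its right-hand side makes sense and is independent of the choice of lift $\tilde t$. Both points are routine: as $t^{-n} N_n t^n = V_n' \subset \exp(\g_1 \cap L) N^2$ and $\tilde\chi$ is defined on $\exp(\g_1 \cap L) N'$, the argument lies in the domain of $\tilde\chi$; moreover, any two lifts of $t$ differ by a central element of $\mu_r$, which commutes through and cancels in $\tilde t^{-n} v \tilde t^n$. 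Viewing $v \in N_n$ inside $\tilde G$ via the unique unipotent splitting makes $\tilde t^{-n} v \tilde t^n$ the unique lift of $t^{-n} v t^n \in V_n'$.

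The key step is the verification of the character property. Here the cornerstone is that the restriction of $s$ to $N_n$ coincides with the unique splitting of $N_n \hookrightarrow \tilde G$ guaranteed by Lemma \ref{splitting of covering}(1): indeed $s|_{N_n}$ is itself a splitting of the cover over a unipotent subgroup, hence it must agree with the unique one. Through this identification, the formula defining $\chi_n$ on $G_n \subset \tilde G$ transports along $s$ exactly to the formula used by Varma for the analogous character of the linear group $G_n$. Lemma 6 of \cite{San14} then supplies, in the linear setting, both the multiplicativity of $\chi_n$ and the two required properties: triviality on $P_n^-$ (immediate from $\tilde\chi(1)=1$) and the prescribed restriction to $\exp((Y^{\#}\cap \varpi^n L) + \varpi^{n+\val 2}L)$ as $\gamma \mapsto \psi(B(\varpi^{-2n}Y, \log\gamma))$, the latter being the content of the compatibility $\tilde\chi \circ \Int(\tilde t^{-n})$ noted in the paragraph preceding the definition of $\chi_n$. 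These properties then transfer to the covering setting at no additional cost.

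The main subtlety lies in the identification $s|_{N_n} = (\text{unique unipotent splitting of } N_n)$ underlying the reduction; once it is in hand, the argument is essentially a bookkeeping exercise lifting the linear computation through the cover. The only other point requiring care is checking that the $P_n^-$-factor and the $N_n$-factor interact correctly in products $g_1 g_2 = p_1 v_1 p_2 v_2$ inside $G_n$ (so that the naive split formula is indeed multiplicative), but this is exactly the content that is already resolved in \cite{San14} on the linear side and is faithfully transported by $s$.
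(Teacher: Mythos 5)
Your reduction to Lemma~6 of \cite{San14} is exactly what the paper does: the lemma is stated with that citation and no further argument, because the defining formula for $\chi_{n}$ descends entirely to the linear group --- $\Int(\tilde{t}^{-n})$ factors through $\Int(t^{-n})$ on $G$ since $\mu_{r}$ is central, and $\tilde{\chi}$ is a character of the subgroup $\exp(\g_{1}\cap L)N'$ of $N\subset G$, so multiplicativity, triviality on $P_{n}^{-}$, and the prescribed restriction to $\exp((Y^{\#}\cap\varpi^{n}L)+\varpi^{n+\val 2}L)$ are all statements about the abstract group $G_{n}$, unaffected by how it is embedded in $\tilde{G}$. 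Your preliminary checks (uniqueness of the Iwahori factorization $g=pv$, independence of the choice of lift $\tilde{t}$) are correct and worth recording. The one point I would push back on is the step you call the cornerstone: the claim that $s|_{N_{n}}$ must coincide with the canonical splitting because ``$s|_{N_{n}}$ is itself a splitting over a unipotent subgroup, hence agrees with the unique one.'' The uniqueness in Lemma~\ref{splitting of covering}(1) is for the $E$-points of unipotent algebraic subgroups (the proof uses divisibility of $E$); $N_{n}$ is a compact open subgroup of $N$, which can carry nontrivial characters into $\mu_{r}$ when $p$ divides $r$, so two splittings over $N_{n}$ need not agree and the asserted identification does not follow from the cited uniqueness. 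Fortunately this identification is not needed for the present lemma, precisely because nothing in its statement depends on the splitting; it would only become relevant for the later compatibility assertions (e.g.\ $\chi_{n}'|_{V_{n}'}=\tilde{\chi}|_{V_{n}'}$ inside $\tilde{G}$), where one should argue via the specific splitting fixed in Lemma~\ref{splitting of covering}(2) rather than via uniqueness.
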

Define a character $\chi_{n}'$ on $G_{n}'$ as follows:
\[
\chi_{n}' := \chi_{n} \circ \Int({\tilde{t}}^{n}).
\]
\begin{remark} \normalfont
 The characters $\chi_{n}$ have been so defined that $\chi_{n}'$ agree with $\chi$ on the intersection of their domains, namely, for large $n$ we have,
\[
\chi_{n}'\mid _{V_{n}'} = \tilde{\chi}\mid _{V_{n}'}.
\]
In particular, $\chi_{n}'\mid _{\exp(L \cap \g_{1})} = \tilde{\chi}\mid _{\exp(L \cap \g_{1})}$. One can also see that $\chi_{n}'$ and $\chi_{m}'$ (for large $n,m$) agree on $G_{n}' \cap G_{m}'$, because they agree on $V_{n}' \cap V_{m}'$ and also on $P_{n}' \cap P_{m}'$ (being trivial on it).
\end{remark}
Set
\begin{equation}
W_{n} := \{ w \in W \mid  \pi(\gamma)w = \chi_{n}(\gamma)w, \forall \gamma \in G_{n} \}
\end{equation}
and 
\begin{equation}
W_{n}' := \{ w \in W \mid  \pi(\gamma)w = \chi_{n}'(\gamma)w, \forall \gamma \in G_{n}' \} = \pi({\tilde{t}}^{-n})W_{n}
\end{equation}
For large $m, n$ define the map $I_{n,m}' : W_{n}' \longrightarrow W_{m}'$ by
\begin{equation}
I_{n,m}'(w) = \int_{G_{m}'} \chi_{m}'(\gamma^{-1}) \pi(\gamma) w \, d\gamma.
\end{equation}
Let $m, n$ be large with $m > n$. Since $\chi_{n}'$ is trivial on $P_{n}' \supset P_{m}'$ and since $G_{m}' = P_{m}'V_{m}'$ and for a convenient choice of measures we have
\[
\begin{array}{lll}
I_{n,m}'(w) &=& \int_{V_{m}'} \chi_{m}'(x^{-1}) \pi(x) w \, dx \\
				 &=& \int_{\exp(\g_{1} \cap L)} \tilde{\chi}^{-1}(\exp X) \pi(\exp X) \int_{N^{2} \cap G_{m}'} \chi (x^{-1}) \pi(x) w \, dx \, dX.
\end{array}
\]
Now using the fact that $\exp(\g_{1} \cap L)$ lies in $G_{n}'$ for large $n$ and that it normalizes the character $\chi|_{N^{2}}$, we get
\[
\begin{array}{lll}
I_{n,m}'(w) &=& \int_{N^{2} \cap G_{m}'} \chi(x^{-1}) \pi(x) w \, dx \\
				 &=& \int_{N' \cap G_{m}'} \chi(x^{-1}) \pi(x) w \, dx.
\end{array}
\]
From this the following is clear for large $n, m$ with $m > n$
\begin{equation} \label{composition of I'}
I_{n,m}' = I_{n+1,m}' \circ I_{n,n+1}'.
\end{equation}
For large $n$, the above equation gives that $\ker I_{n,m}' \subset \ker I_{n,p}'$ for $n < m \leq p$. Set $W_{n, \chi}':= \cup_{m>n} \ker I_{n,m}'$. Recall that for any unipotent subgroup $U$, a character $\chi : U \longrightarrow \C^{\times}$ and $w \in W$;  $\int_{K} \chi(x)^{-1} \pi(x) w \, dx = 0$ for some open compact subgroup $K$ of $U$ if and only if $w \in U_{\chi}W$, where $U_{\chi}W$ is the span of $\{ \pi(u)w - \chi(u)w \mid u \in U, w \in W \}$. Thus we have $W_{n, \chi} \subset N^{2}_{\chi}W$ as well as $W_{n, \chi} \subset N'_{\chi}W$, which gives the following natural maps 
\[
j_{n} : W_{n}'/W_{n,\chi}' \longrightarrow W/N_{\chi}^{2}W \text{ and } j_{n}' : W_{n}'/W_{n,\chi}' \longrightarrow W/N_{\chi}'W
\] 
and these give the following diagram:
\begin{equation} \label{commtative diagram}
\xymatrix{ W_{n}'/W_{n, \chi}' \ar[rr]^{j_{n}'} \ar[dr]^{j_{n}} & & W/N_{\chi}'W \\ & W/N_{\chi}^{2}W \ar@{-->}[ru]_{\exists \, {\rm natural}} & }
\end{equation}
By the compatibility between $\chi_{n}'$ and $\tilde{\chi}$, it is easy to see that the image of $j_{n}'$ is contained in $(W/N_{\chi}'W)^{(H_{0}, \tilde{\chi})}$. Let $w \in W$ such that the image $\bar{w}$ of $w$ in $W/N_{\chi}'W$ belongs to $(W/N_{\chi}'W)^{(H_{0}, \tilde{\chi})}$. For large $n$, $P_{n}'$ acts trivially on $w$, as $(\pi, W)$ is smooth. Since $G_{n}' = P_{n}'V_{n}'= V_{n}'P_{n}'$, the element
\[
\int_{V_{n}'} \chi_{n}'(x^{-1}) \pi(x) w \, dx
\]
belongs to $W_{n}'$. As $\chi_{n}'$ and $\chi$ are compatible, it can be seen that its image in $W/N_{\chi}'W$ is $\bar{w}$. This gives us the following lemma.

\begin{lemma} \label{W_n non zero}
Let $(Y, \varphi)$ be arbitrary. Then any element of $(W/N_{\chi}'W)^{(H_{0}, \chi)}$ belongs to $j_{n}'(W_{n}')$ for all sufficiently large $n$. In particular, if $\mathcal{W} \neq 0$ then, for large $n$, $W_{n}$ and $W_{n}'$ are non-zero.
\end{lemma}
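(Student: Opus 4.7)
The statement is essentially already set up by the paragraph preceding it, so the plan is to make that sketch into a clean argument and then extract the ``In particular'' from it.

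The plan is to start with an element $\bar{w} \in (W/N_{\chi}'W)^{(H_{0}, \tilde{\chi})}$ and exhibit a preimage in $W_{n}'$ whose image under $j_{n}'$ is $\bar{w}$. Concretely, I would lift $\bar{w}$ to some $w \in W$. Since $(\pi, W)$ is smooth, $w$ is fixed by some open compact subgroup of $\tilde{G}$. Because the parabolic part $P_{n}' = \Int(\tilde{t}^{-n})(P_{n}^{-})$ is contained in the parahoric chain $G_{n}'$ and shrinks as $n$ grows (while $V_{n}'$ expands), we can take $n$ large enough that $P_{n}'$ acts trivially on $w$. Having arranged this, I would define
\[
w_{n} := \int_{V_{n}'} \chi_{n}'(x^{-1}) \pi(x) w \, dx.
\]

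The next step is to verify $w_{n} \in W_{n}'$. Using the Iwahori factorization $G_{n}' = P_{n}'V_{n}' = V_{n}'P_{n}'$ (transported from the one for $G_{n}$ in Lemma \ref{subgroup G_n} by $\Int(\tilde{t}^{-n})$), together with the fact that $\chi_{n}'$ is trivial on $P_{n}'$ (Lemma \ref{character_chi_n} applied to $\chi_{n}$ and conjugated) and that $P_{n}'$ fixes $w$, a standard change-of-variable argument shows that $w_{n}$ transforms under $G_{n}'$ by $\chi_{n}'$. Then I would check that $j_{n}'(w_{n}) = \bar{w}$ in $W/N_{\chi}'W$: the key input here is the compatibility noted just before the lemma, namely $\chi_{n}'|_{V_{n}'} = \tilde{\chi}|_{V_{n}'}$, so that the averaging integral, when reduced modulo $N_{\chi}'W$, merely replaces $w$ by $w$ up to elements of the form $\pi(n)w - \chi(n)w$ with $n \in N'$; by definition these vanish in $W/N_{\chi}'W$, hence the image equals $\bar{w}$ up to a positive volume factor that can be absorbed into the choice of Haar measure.

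For the ``In particular'' clause, I would observe that if $\mathcal{W} \neq 0$, then by the identification $\mathcal{W} \cong (W/N_{\chi}'W)^{(H_{0}, \tilde{\chi})}$ established just above the lemma, there exists $\bar{w} \neq 0$ in the isotypic component. Applying the construction above produces $w_{n} \in W_{n}'$ with $j_{n}'(w_{n}) = \bar{w} \neq 0$, so $w_{n} \neq 0$ and hence $W_{n}' \neq 0$ for all sufficiently large $n$. Since $W_{n}' = \pi(\tilde{t}^{-n})W_{n}$ by definition, this forces $W_{n} \neq 0$ as well.

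The only delicate point, and the one I would take care with, is the verification that $j_{n}'(w_{n}) = \bar{w}$. All the other parts are routine manipulations with Iwahori factorization and smoothness. The delicacy lies in ensuring that the finite-level twisting by $\chi_{n}'$ on $V_{n}'$, once pushed into the quotient by the larger subgroup $N'_{\chi}W$, genuinely recovers $\bar{w}$ and not some other element; this is exactly where one must cite the compatibility $\chi_{n}'|_{V_{n}'} = \tilde{\chi}|_{V_{n}'}$ and the fact that $V_{n}'$ exhausts $\exp(\g_{1} \cap L) N^{2}$, so that the averaged element really represents the same class modulo $N'_{\chi}W$.
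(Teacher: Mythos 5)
Your proposal is correct and follows essentially the same route as the paper: the paper's justification is exactly the paragraph preceding the lemma, which lifts $\bar{w}$, uses smoothness to make $P_{n}'$ act trivially, forms $\int_{V_{n}'}\chi_{n}'(x^{-1})\pi(x)w\,dx \in W_{n}'$ via the Iwahori factorization $G_{n}'=P_{n}'V_{n}'$, and uses the compatibility $\chi_{n}'|_{V_{n}'}=\tilde{\chi}|_{V_{n}'}$ together with the $(H_{0},\tilde{\chi})$-isotypy of $\bar{w}$ to see that the image is $\bar{w}$ (up to the volume of $V_{n}'$, which is harmless since only membership in $j_{n}'(W_{n}')$ is claimed). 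Your handling of the delicate point and of the ``in particular'' clause matches the intended argument.
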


\section{Main theorem} \label{proof of main theorem}
Now recall that, by the work of Wen-Wei Li \cite{WWLi}, the Harish-Chandra-Howe character expansion of an irreducible admissible genuine representation of $\tilde{G}$ at the identity element has an expression of the same form as that of an irreducible admissible representation of a linear group. The proof of the following lemma for a covering group follows verbatim that of Proposition I.11 in \cite{MW87} and Proposition 1 in \cite{San14}.
\begin{proposition} \label{W is non-zero} 
Let $\mathcal{W}$ be the space of degenerate Whittaker forms for $\pi$ with respect to a given $(Y, \varphi)$. If $\mathcal{W} \neq 0$ then there exists a nilpotent orbit $\mathcal{O}$ in $\mathcal{N}_{\tr}(\pi)$ such that $\mathcal{O}_{Y} \leq \mathcal{O}$ (i.e.,  $Y \in \bar{\mathcal{O}}$).
\end{proposition}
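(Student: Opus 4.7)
My plan is to follow the strategy of Proposition I.11 of \cite{MW87}, adapted to $\tilde{G}$ via the canonical splittings of Lemma \ref{splitting of covering}. Since $\mathcal{W} \neq 0$, Lemma \ref{W_n non zero} gives $W_n \neq 0$ for all sufficiently large $n$. I would form the idempotent $f_n := \meas(G_n)^{-1} \chi_n^{-1} \cdot \mathbf{1}_{G_n} \in C_c^{\infty}(G)$ and its lift $\tilde{f}_n \in C_c^{\infty}(\tilde{G})$, via Definition \ref{phi tilde}, using the fixed splitting of $\tilde{G} \to G$ over $G_n$. By Lemma \ref{convolution} the lift is again idempotent, and $\pi(\tilde{f}_n)$ projects $W$ onto $W_n$. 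Hence $\tr(\pi(\tilde{f}_n)) = \dim W_n$, a nonzero integer for large $n$.

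Next I would apply the Harish-Chandra--Howe character expansion for irreducible admissible genuine representations of $\tilde{G}$, as established in \cite{WWLi}. For $n$ large, $\supp(\tilde{f}_n)$ lies inside the region of validity of the expansion, namely a small symmetric neighbourhood of the identity identified with a neighbourhood of $0 \in \g$ via $\exp$ composed with the splitting. Therefore
\[
\dim W_n \;=\; \Theta_{\pi}(\tilde{f}_n) \;=\; \sum_{\mathcal{O}} c_{\mathcal{O}}\, \widehat{\mu_{\mathcal{O}}}(f_n \circ \exp),
\]
and since the left-hand side is nonzero, at least one orbit $\mathcal{O}$ must satisfy simultaneously $c_{\mathcal{O}} \neq 0$ and $\widehat{\mu_{\mathcal{O}}}(f_n \circ \exp) \neq 0$.

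The key geometric step is to show that any such $\mathcal{O}$ necessarily contains $Y$ in its closure. I would conjugate by $\tilde{t}^{\,n}$ with $t = \varphi(\varpi)$, transporting $(G_n, \chi_n)$ to $(G_n', \chi_n')$, whose defining data are controlled by $(Y, \varphi)$ through (\ref{property:1}). Combining the standard homogeneity of $\widehat{\mu_{\mathcal{O}}}$ under the dilation $X \mapsto \varpi^{-2n} X$ of $\g$ with the identity $\Ad(t^{-n}) Y = \varpi^{2n} Y$, and the fact that the rescaled support of $f_n \circ \exp$ concentrates near $Y$ in a controlled way, one should obtain, exactly as in \cite{MW87} and \cite{San14}, that $\widehat{\mu_{\mathcal{O}}}(f_n \circ \exp) = 0$ for all large $n$ unless $Y \in \overline{\mathcal{O}}$. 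This would force the non-vanishing orbit to satisfy $\mathcal{O}_Y \leq \mathcal{O}$.

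The main obstacle will be verifying that the passage between $\tilde{G}$ and $G$ is completely harmless: one needs that the splitting used to lift $f_n$ to $\tilde{f}_n$ agrees with the one implicit in Li's set-up of the character expansion, and that all the Fourier-theoretic estimates (homogeneity of $\widehat{\mu_{\mathcal{O}}}$, the shrinking and rescaling of supports) take place downstairs in $\g$ and are blind to the cover. Both requirements are guaranteed by the canonical splittings over unipotent subgroups and the compatible choice of splitting over $G_n$ made in Lemma \ref{splitting of covering}; granted these, the linear-group proof should transfer with no essential modification, which is why the authors of \cite{MW87} and \cite{San14} treat this step as routine once the character expansion is in place.
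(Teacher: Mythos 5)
Your proposal is correct and follows exactly the route the paper itself takes: the paper gives no independent argument for Proposition \ref{W is non-zero}, stating only that the proof ``follows verbatim'' Proposition I.11 of \cite{MW87} and Proposition 1 of \cite{San14}, which is precisely the trace-of-idempotent plus Fourier-support/rescaling argument you reconstruct, transferred to $\tilde{G}$ via the splittings of Lemma \ref{splitting of covering} and Lemma \ref{convolution}. No substantive difference or gap to report.
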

Let the function $\phi_{n} : G \longrightarrow \C$ be defined by
\[
 \phi_{n}(\gamma) = \left\{ \begin{array}{ll}
 											\chi_{n}(\gamma^{-1}), & \text{ if } \gamma \in G_{n} \\
 											0, & \text{ otherwise. }
 											\end{array} \right.
\]
Consider the corresponding function $\tilde{\phi_{n}} : \tilde{G} \longrightarrow \C$.
Write the character expansion at the identity element as follows:
\[
\Theta_{\pi} \circ \exp= \sum_{\mathcal{O}} c_{\mathcal{O}} \widehat{\mu_{\mathcal{O}}}.
\]
Choose $n$ large enough so that the above expansion is valid over $G_{n}$ and then evaluate $\Theta_{\pi}$ at the function $\tilde{\phi_{n}}$. As $\pi(\tilde{\phi_{n}})$ is a projection from $W$ to $W_{n}$, by definition we get $\Theta_{\pi}(\tilde{\phi_{n}}) = \trace \, \pi(\tilde{\phi_{n}}) = \dim W_{n}$. Now assume that $(Y, \varphi)$ is such that $O_{Y}$ is a maximal element in $\mathcal{N}_{tr}(\pi)$. On the other hand, if we evaluate $\sum_{\mathcal{O}} c_{\mathcal{O}} \widehat{\mu_{\mathcal{O}}}(\tilde{\phi_{n}})$, it turns out that $\widehat{\mu_{\mathcal{O}}}(\tilde{\phi_{n}})$ is zero unless $\mathcal{O} = \mathcal{O}_{Y}$. In addition, if we fix a $G$-invariant measure on $\mathcal{O}_{Y}$ as in I.8 of \cite{MW87} (for more details about this invariant measure see Section 3 of \cite{San14}), we get the following lemma.
\begin{lemma} \label{dimW_n is c_O} [Lemma I.12 in \cite{MW87} and Lemma 7 in \cite{San14}] \\
If $(Y, \varphi)$ is such that $\mathcal{O}_{Y}$ is a maximal element of $\mathcal{N}_{\tr}(\pi)$. Then for large $n$, 
\[
\dim W_{n} = c_{\mathcal{O}_{Y}}.
\]
In particular, the dimension of $W_{n}$ is finite and independent of $n$, for large $n$.
\end{lemma}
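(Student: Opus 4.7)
The approach is to evaluate $\Theta_\pi(\tilde{\phi}_n)$ in two ways and match the results. On the spectral side, the chosen splitting of Lemma~\ref{splitting of covering} lets us write $\pi(\tilde{\phi}_n) = \int_{G_n} \chi_n(\gamma)^{-1}\, \pi(s(\gamma))\, d\gamma$; with an appropriate normalisation of Haar measure this operator is the projector from $W$ onto $W_n$. Since $\pi$ is admissible, $W_n$ is finite dimensional, and taking the trace gives $\Theta_\pi(\tilde{\phi}_n) = \dim W_n$.

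On the geometric side, since the Harish-Chandra-Howe expansion for $\tilde{G}$ of \cite{WWLi} has exactly the same form as in the linear case, and $\tilde{\phi}_n$ is supported in the image of the splitting over $G_n$, we have for $n$ large
\[
\Theta_\pi(\tilde{\phi}_n) \;=\; \sum_{\mathcal{O}} c_{\mathcal{O}}\, \widehat{\mu_{\mathcal{O}}}(\phi_n \circ \exp),
\]
upon transferring the integral from $\tilde{G}$ to $G$ via the splitting and then from $G$ to $\g$ via compatible measures. The main step is to show that $\widehat{\mu_{\mathcal{O}}}(\phi_n \circ \exp) = 0$ unless $\mathcal{O}_Y \leq \mathcal{O}$. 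To prove this I would make the substitution $X = \Ad(t^n) X'$ with $t = \varphi(\varpi)$; since $\Ad(t)$ acts by $\varpi^i$ on $\g_i$, and since by Lemma~\ref{character_chi_n} the character $\chi_n$ agrees on a suitable sublattice with $\gamma \mapsto \psi(B(\varpi^{-2n}Y, \log \gamma))$, the rescaled integrand becomes the Fourier transform of a test function supported in an arbitrarily small neighbourhood of $Y$ in $\g$. The $\Ad(G)$-invariance and quasi-homogeneity of $\widehat{\mu_{\mathcal{O}}}$ then force the support conclusion, exactly as in Proposition~I.11 of \cite{MW87}; the complications arising when $p=2$ are handled as in Lemma~7 of \cite{San14}.

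Combining the two computations with the maximality of $\mathcal{O}_Y$ in $\mathcal{N}_{\tr}(\pi)$: any $\mathcal{O}$ strictly above $\mathcal{O}_Y$ satisfies $c_{\mathcal{O}} = 0$, and every other $\mathcal{O}$ contributes zero by the support analysis, so only the $\mathcal{O} = \mathcal{O}_Y$ term survives. With the $G$-invariant measure on $\mathcal{O}_Y$ fixed as in I.8 of \cite{MW87} (respectively Section~3 of \cite{San14}) this surviving term evaluates to $c_{\mathcal{O}_Y}$ for all sufficiently large $n$, giving $\dim W_n = c_{\mathcal{O}_Y}$. The hard part is the Fourier-analytic argument of the second step: one must keep precise track of the scaling factors and of the normalisation of the measure on $\mathcal{O}_Y$ so that the surviving orbital integral evaluates to $c_{\mathcal{O}_Y}$ exactly, and not a nontrivial scalar multiple. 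However, since $\phi_n$ and all of the integrations here take place on $G$ and $\g$, the covering-group structure enters only through the identification $\Theta_\pi(\tilde{\phi}_n) = \dim W_n$, which uses Lemma~\ref{splitting of covering} and Lemma~\ref{convolution}; the Lie-algebra side of the argument is identical to the one in \cite{MW87, San14} and needs no new input.
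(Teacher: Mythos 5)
Your proposal is correct and follows essentially the same route as the paper: evaluate $\Theta_{\pi}(\tilde{\phi}_{n})$ once as $\trace\,\pi(\tilde{\phi}_{n})=\dim W_{n}$ via the projector onto $W_{n}$ (using the splitting over $G_{n}$), and once via the Harish-Chandra--Howe expansion, where the support analysis of $\widehat{\mu_{\mathcal{O}}}$ together with the maximality of $\mathcal{O}_{Y}$ in $\mathcal{N}_{\tr}(\pi)$ leaves only the $\mathcal{O}_{Y}$ term. Your observation that the covering structure enters only through $\Theta_{\pi}(\tilde{\phi}_{n})=\dim W_{n}$, with the Lie-algebra computation imported verbatim from Lemma~I.12 of \cite{MW87} and Lemma~7 of \cite{San14}, is exactly the point of the paper's treatment.
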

From Lemma \ref{W_n non zero} we know that every vector in $\mathcal{W}$ is in the image of $j_{n}'$ for large $n$. In particular, if $W_{n}$ is finite dimensional, we get that the map $j_{n}'$ is surjective. Moreover, we have the following lemma whose proof is verbatim that of Corollary I.14 in \cite{MW87} and Lemma 8 in \cite{San14} in the case of a linear group.
\begin{lemma} \label{j_n and j_n'} 
Let $(Y, \varphi)$ is such that $\mathcal{O}_{Y}$ is a maximal element of $\mathcal{N}_{\tr}(\pi)$. Then for large $n$, the maps $j_{n}$ and $j_{n}'$ are injections and the image of $j_{n}'$ is $(W/N_{\chi}'W)^{(H_{0}, \tilde{\chi})}$.
\end{lemma}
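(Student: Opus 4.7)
The plan is to follow the proof of Corollary I.14 in \cite{MW87}, together with the modifications in Lemma 8 of \cite{San14} accommodating the $p=2$ case, essentially verbatim. The only new input required in the covering setting is that every subgroup entering the argument -- the unipotents $N$, $N'$, $N^{2}$, $V_{m}'$, and the open compact subgroups $G_{n}$, $G_{n}'$, $P_{n}^{-}$, $P_{n}'$ -- admits a distinguished lift to $\tilde{G}$, coming either from the unique splitting over unipotents in Lemma \ref{splitting of covering}(1) or from the fixed splitting of Lemma \ref{splitting of covering}(2). These lifts are mutually compatible, and Lemma \ref{convolution} guarantees that every convolution and every character-twisted integration occurring in the linear-group argument can be performed inside $\tilde{G}$ without modification.

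First, I would record that by Lemma \ref{W_n non zero} the map $j_{n}'$ is surjective onto $(W/N_{\chi}'W)^{(H_{0}, \tilde{\chi})}$ for all sufficiently large $n$, while by Lemma \ref{dimW_n is c_O} the space $W_{n}'$ is finite dimensional of dimension $c_{\mathcal{O}_{Y}}$ and stable for large $n$. Using the composition formula \eqref{composition of I'}, the increasing filtration $\{\ker I_{n,m}'\}_{m > n}$ of $W_{n}'$ stabilizes inside a finite dimensional space, so $W_{n,\chi}' = \ker I_{n,m_{0}}'$ for some $m_{0}=m_{0}(n)$. From the reduction $I_{n,m}'(w) = \int_{N' \cap G_{m}'}\chi(x^{-1})\pi(x)w\, dx$ derived immediately before \eqref{composition of I'}, one checks that $j_{m}' \circ I_{n,m}' = \mathrm{vol}(N' \cap G_{m}') \cdot j_{n}'$ on $W_{n}'$, so the injectivity of $j_{n}'$ is a property independent of the chosen large $n$.

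To complete the proof I would show directly that $j_{n}'$ is injective. Given $w \in W_{n}'$ whose image in $W/N_{\chi}'W$ vanishes, write $w = \sum_{i}(\pi(\gamma_{i})w_{i} - \chi(\gamma_{i})w_{i})$ with the $\gamma_{i}$ lying in a fixed compact subset of $N'$, and use the compatibility $\chi_{m}'|_{V_{m}' \cap N'} = \chi|_{V_{m}' \cap N'}$ (noted after \eqref{chi_n is character} and following from $\tilde{\chi}|_{N'} = \chi$), together with the fact that $\bigcup_{m} V_{m}'$ exhausts $\exp(L \cap \mathfrak{g}_{1}) N^{2}$, to show that for $m$ large enough each summand is annihilated by the integration defining $I_{n,m}'$. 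Hence $w \in W_{n,\chi}'$, proving injectivity of $j_{n}'$; combined with surjectivity this identifies the image of $j_{n}'$ with $(W/N_{\chi}'W)^{(H_{0}, \tilde{\chi})}$. Injectivity of $j_{n}$ is then immediate from the factorization $j_{n}' = (\text{natural map}) \circ j_{n}$ in diagram \eqref{commtative diagram}.

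The main obstacle is the analytic reduction in the third paragraph, which is the technical heart of the linear-group argument in \cite{MW87, San14}; in the covering case it requires no essentially new idea, since the $\gamma_{i}$ lie in the canonical splitting of $N'$ into $\tilde{G}$ and $I_{n,m}'$ is defined using the fixed splitting of $G_{m}'$, which agrees with the unipotent splitting on the intersection $V_{m}' \cap N'$ by the compatibility statement in Lemma \ref{splitting of covering}(2).
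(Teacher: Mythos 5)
Your overall framing is right: the paper itself gives no argument here, deferring verbatim to Corollaire I.14 of \cite{MW87} and Lemma 8 of \cite{San14}, and the genuinely new points in the covering setting are exactly the ones you list (the fixed compatible splittings over $G_{n}$, $G_{n}'$ and over the unipotent subgroups, plus Lemma \ref{convolution}). Your treatment of surjectivity, and of the stabilization of the images via the relation $j_{m}'\circ I_{n,m}' = \mathrm{vol}(N'\cap G_{m}')\, j_{n}'$ together with Lemma \ref{dimW_n is c_O}, is also fine.

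The gap is in your third paragraph, the direct proof that $j_{n}'$ is injective. Your mechanism for killing a summand $\pi(\gamma_{i})w_{i}-\chi(\gamma_{i})w_{i}$ under $I_{n,m}'$ is the change of variables $x\mapsto x\gamma_{i}$ in $\int_{N'\cap G_{m}'}\chi(x^{-1})\pi(x)(\cdot)\,dx$, which requires $\gamma_{i}\in N'\cap G_{m}'$. But the groups $N'\cap G_{m}'=N'\cap V_{m}'$ do \emph{not} exhaust $N'$: since $\bigcup_{m}V_{m}'=\exp(\g_{1}\cap L)N^{2}$, their union is only $\exp(\g_{1}\cap Y^{\#}\cap L)N^{2}$, because conjugation by $t^{-m}$ rescales $\g_{i}$ by $\varpi^{m(1-i)}$ and so never enlarges the $\g_{1}$-direction beyond the lattice $L$. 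Hence whenever $\g_{1}\cap Y^{\#}\neq 0$ --- precisely the situation the distinction between $N'$ and $N^{2}$ is designed to handle, i.e.\ $\varphi$ not arising from an $\mathfrak{sl}_{2}$-triple --- an element of $N_{\chi}'W$ may involve $\gamma_{i}$ lying in no $N'\cap G_{m}'$, and your argument does not show $w\in W_{n,\chi}'$. What your computation does prove is the containment $W_{n}'\cap N^{2}_{\chi}W\subseteq W_{n,\chi}'$, i.e.\ injectivity of $j_{n}$, since $N^{2}\cap G_{m}'$ does exhaust $N^{2}$; but injectivity of $j_{n}$ does not imply that of $j_{n}'=(\text{natural map})\circ j_{n}$ (the implication goes the other way, as you yourself note). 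The actual proof being cited is a corollary of I.13 of \cite{MW87}, that is, of the injectivity of $I_{n,n+1}'$ (part 2 of Lemma \ref{injectivity} in this paper, combined with $\dim W_{n}'=c_{\mathcal{O}}$): this is where the maximality of $\mathcal{O}_{Y}$ genuinely enters, and it is the one ingredient your proposal never invokes. An argument that yields injectivity of $j_{n}'$ without it --- and hence for arbitrary $(Y,\varphi)$ --- should already be viewed with suspicion.
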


Let $\phi_{n}' : G \longrightarrow \C$ be defined by
\[
 \phi_{n}'(\gamma) = \left\{ \begin{array}{ll}
 											\chi_{n}'(\gamma^{-1}), & \text{ if } \gamma \in G_{n}' \\
 											0, & \text{ otherwise. }
 											\end{array} \right.
\]

Consider the corresponding function $\tilde{\phi}_{n}' : \tilde{G} \longrightarrow \C$.
\begin{lemma} \label{injectivity}
Consider a pair $(Y, \varphi)$ such that $\mathcal{O} = \mathcal{O}_{Y}$ is a maximal in $\mathcal{N}_{\tr}(\pi)$. Then for large enough $n$:
\begin{enumerate} 
\item Let $\mathcal{Y}_{n} \subset  G_{n+1}' \cap G(Y)$ be a set of representatives for the $G_{n}'$ double cosets in $G_{n}'(G_{n+1} \cap G(Y))G_{n}'$. Then large enough $n$, 
\[
\tilde{\phi}_{n}' \ast \tilde{\phi}_{n+1}' \ast \tilde{\phi}_{n}'(g)= \left\{ \begin{array}{ll}
																					\lambda \cdot (\chi_{n}')^{-1}(h_{1}h_{2}), & \text{ if } g=h_{1}yh_{2} \text{ with } y \in \mathcal{Y}_{n},  h_{1}, h_{2} \in G_{n}' \\
																					0, & \text{ if } g \notin G_{n}'\mathcal{Y}_{n}G_{n}'
																					\end{array} \right.
\]
where $\lambda = \meas(G_{n}' \cap G_{n+1}') \meas(G_{n}')$.
\item For large $n$, $I_{n, n+1}'$ is injective.
\end{enumerate}
\end{lemma}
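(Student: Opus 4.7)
Part (1) is a direct convolution calculation, and Part (2) follows by evaluating the operator attached to the triple convolution on a putative element of $\ker I_{n,n+1}'$ in two different ways.

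For Part (1), the supports of $\phi_n'$ and $\phi_{n+1}'$ all lie in an open subgroup of $G$ over which the fixed splitting to $\tilde{G}$ is multiplicative, so by Lemma \ref{convolution} it suffices to compute the corresponding triple convolution $\phi_n' \ast \phi_{n+1}' \ast \phi_n'$ on $G$ and then lift. The support is contained in $G_n' G_{n+1}' G_n'$, and I would exploit the Iwahori decompositions $G_n' = P_n' V_n'$ and $G_{n+1}' = P_{n+1}' V_{n+1}'$ together with the containments $P_{n+1}' \subseteq P_n'$ and $V_n' \subseteq V_{n+1}'$ (coming from the action of $\mathrm{Ad}(\tilde{t}^{-n})$ on the weight spaces $\g_i$). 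Since $\chi_{n+1}'$ is trivial on $P_{n+1}'$ and agrees with the Whittaker-type character $\tilde\chi$ on $V_{n+1}'$, the triple integral collapses, after applying the Iwahori factorization in the middle factor, to an integral over $V_{n+1}'$ modulo $V_n'$ of a character that, by orthogonality, detects precisely those $v \in V_{n+1}'$ which commute with $Y$, i.e., those in $G(Y)$. This forces the surviving double-coset representatives to lie in $G_{n+1}' \cap G(Y)$. The constant $\lambda = \meas(G_n' \cap G_{n+1}')\meas(G_n')$ arises from the product of Haar measures of the subgroups left after the orthogonality step.

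For Part (2), let $w \in W_n'$ with $I_{n,n+1}'(w) = 0$. Using $\pi(\tilde\phi_n')\,w = \meas(G_n')\, w$ (since $w \in W_n'$) and the identity $I_{n,n+1}' = \pi(\tilde\phi_{n+1}')$, we obtain
\[
\pi(\tilde\phi_n' \ast \tilde\phi_{n+1}' \ast \tilde\phi_n')(w) = \meas(G_n')\,\pi(\tilde\phi_n')\bigl(I_{n,n+1}'(w)\bigr) = 0.
\]
On the other hand, substituting the formula from Part (1) expresses this value as an explicit finite sum $\sum_{y \in \mathcal{Y}_n} c_y \,\pi(y) w$ with explicit nonzero scalars $c_y$, whose $y = 1$ term is a nonzero multiple of $w$ itself. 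Since $y \in G(Y)$ centralizes $Y$, conjugation by $y$ preserves the pair $(G_n', \chi_n')$ closely enough that $\pi(y)$ stabilizes $W_n'$; combined with the vanishing of the left-hand side, this yields a non-trivial linear relation inside the finite-dimensional space $W_n'$ (finite-dimensional by Lemma \ref{dimW_n is c_O}) that forces $w = 0$.

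The main obstacle is the last step of Part (2): controlling the terms with $y \neq 1$ in the sum. Here one exploits smoothness of $\pi$ and the fact that each $y \in \mathcal{Y}_n$ is near the identity (for large $n$, relative to the open subgroup fixing the finite-dimensional $W_n'$ pointwise) to make $\sum_y c_y\, \pi(y)|_{W_n'}$ invertible. Alternatively, using $\dim W_n' = \dim W_{n+1}' = c_{\mathcal{O}_Y}$ from Lemma \ref{dimW_n is c_O}, one can reduce injectivity of $I_{n,n+1}'$ to its surjectivity and deduce the latter from a symmetric convolution identity. In either approach, Part (1) supplies the explicit formula that drives the argument, mirroring Lemma I.13 of \cite{MW87} and Lemma 8 of \cite{San14} in the linear case.
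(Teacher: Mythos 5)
Your proposal follows essentially the same route as the paper: part (1) is obtained by reducing to the linear-group computation (which the paper simply quotes from Lemma 9(a) of \cite{San14} rather than re-deriving) via Lemma \ref{convolution}, and part (2) shows that $\pi(\tilde{\phi}_{n}' \ast \tilde{\phi}_{n+1}' \ast \tilde{\phi}_{n}')$, being a positive combination of the translates $\pi(\tilde{\phi}_{n}')\pi(y)$ with $y \in G(Y)$ acting trivially on the finite-dimensional $W_{n}'$, is a nonzero multiple of the identity there, forcing $\ker I_{n,n+1}'\cap W_{n}'=0$. This matches the paper's argument, so no further comparison is needed.
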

\begin{proof}
From part (a) of Lemma 9 in \cite{San14}, we have
\[
\phi_{n}' \ast \phi_{n+1}' \ast \phi_{n}' (g)= \left\{ \begin{array}{ll}
																					\lambda \cdot (\chi_{n}')^{-1}(h_{1}h_{2}), & \text{ if } g=h_{1}yh_{2} \text{ with } y \in \mathcal{Y}_{n},  h_{1}, h_{2} \in G_{n}' \\
																					0, & \text{ if } g \notin G_{n}'\mathcal{Y}_{n}G_{n}'
																					\end{array} \right.
\]
where $\lambda = \meas(G_{n}' \cap G_{n+1}') \meas(G_{n}')$. Now part 1 follows from Lemma \ref{convolution}, as we have 
\begin{equation} 
\tilde{\phi}_{n}' \ast \tilde{\phi}_{n+1}' \ast \tilde{\phi}_{n}' = \oversortoftilde{(\phi_{n}' \ast \phi_{n+1}' \ast \phi_{n}')}.
\end{equation}
Now we prove part 2. It is enough to say that $\pi( \tilde{\phi}_{n}' \ast \tilde{\phi}_{n+1}' \ast \tilde{\phi}_{n}')$ acts by a non-zero multiple of identity on $W_{n}'$. This implies that $I_{n+1,n}' \circ I_{n,n+1}'$ is a non-zero multiple of identity on $W_{n}'$. From part 1 we get that $\tilde{\phi}_{n}' \ast \tilde{\phi}_{n+1}' \ast \tilde{\phi_{n}'}$ is a positive linear combination of functions $\tilde{\phi}_{n,y}' : \gamma \mapsto \tilde{\phi}_{n}'(\gamma y^{-1})$, where $y \in G_{n+1} \cap G(Y)$ is fixed and $G(Y)$ is centralizer of $Y$ in $G$. Then the lemma follows from the fact that $\pi(y)$ acts trivially on $W_{n}'$ for large $n$, so that
\[
\pi(\tilde{\phi}_{n,y}')|_{W_{n}'} = \pi(\tilde{\phi}_{n}') \pi(y)|_{W_{n}'} = \pi(\tilde{\phi}_{n}')|_{W_{n}'}. \qedhere
\]
\end{proof}

\begin{theorem}
Let $(\pi, W)$ be an irreducible admissible genuine representation of $\tilde{G}$. 
\begin{enumerate}
\item  The set of maximal elements in $\mathcal{N}_{\tr}(\pi)$ coincides with the set of maximal elements in $\mathcal{N}_{Wh}(\pi)$.
\item Let $\mathcal{O}$ be a maximal element in $\mathcal{N}_{\tr}(\pi)$. Then the coefficient $c_{\mathcal{O}}$ equals the dimension of the space of degenerate Whittaker forms with respect to any pair $(Y, \varphi)$ such that with $Y \in \mathcal{O}$ is arbitrary and $\varphi : \mathbb{G}_{m} \longrightarrow {\bf G}$ satisfies $\Ad(\varphi(s))Y = s^{-2}Y$ for all $s \in E^{\times}$.
\end{enumerate}
\end{theorem}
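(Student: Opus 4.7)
The plan is to prove part (2) first and then deduce part (1) formally, using Proposition \ref{W is non-zero} together with (2). Fix $\mathcal{O}$ maximal in $\mathcal{N}_{\tr}(\pi)$ and a pair $(Y, \varphi)$ with $Y \in \mathcal{O}$. By Lemma \ref{dimW_n is c_O}, for all sufficiently large $n$ we have $\dim W_n = c_{\mathcal{O}}$, and since $\pi(\tilde{t}^{-n}) \colon W_n \to W_n'$ is an isomorphism we also get $\dim W_n' = c_{\mathcal{O}}$; in particular $W_n'$ is finite-dimensional.

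Next I would verify that $W_{n, \chi}' = 0$ for large $n$. By Lemma \ref{injectivity}(2), $I_{n, n+1}'$ is injective for large $n$; iterating Equation \ref{composition of I'} expresses $I_{n, m}'$ as a composition of such maps, so $I_{n, m}'$ is injective for every $m > n$, whence $W_{n, \chi}' = \bigcup_{m > n} \ker I_{n, m}' = 0$. Combining Lemma \ref{W_n non zero} (which makes the image of $j_n'$ contain $(W/N_\chi' W)^{(H_0, \tilde{\chi})}$) with Lemma \ref{j_n and j_n'} (which says $j_n'$ is injective with image exactly $(W/N_\chi' W)^{(H_0, \tilde{\chi})}$), the induced map $W_n' = W_n'/W_{n, \chi}' \to (W/N_\chi' W)^{(H_0, \tilde{\chi})} \cong \mathcal{W}$ is an isomorphism. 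This yields $\dim \mathcal{W} = \dim W_n' = c_{\mathcal{O}}$, settling (2).

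For (1), the two inclusions of maximal sets follow formally. If $\mathcal{O} \in {\rm Max}(\mathcal{N}_{Wh}(\pi))$, then Proposition \ref{W is non-zero} produces some $\mathcal{O}' \in \mathcal{N}_{\tr}(\pi)$ with $\mathcal{O} \leq \mathcal{O}'$; picking $\mathcal{O}'' \in {\rm Max}(\mathcal{N}_{\tr}(\pi))$ above $\mathcal{O}'$ and applying (2) to $\mathcal{O}''$ gives $c_{\mathcal{O}''} = \dim \mathcal{W}_{(Y'', \varphi'')} \neq 0$ for suitable $Y'' \in \mathcal{O}''$, hence $\mathcal{O}'' \in \mathcal{N}_{Wh}(\pi)$. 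Maximality of $\mathcal{O}$ in $\mathcal{N}_{Wh}(\pi)$ then forces $\mathcal{O} = \mathcal{O}''$, so $\mathcal{O} \in {\rm Max}(\mathcal{N}_{\tr}(\pi))$. Conversely, if $\mathcal{O} \in {\rm Max}(\mathcal{N}_{\tr}(\pi))$, part (2) gives $\mathcal{O} \in \mathcal{N}_{Wh}(\pi)$; any strictly larger $\mathcal{O}' \in \mathcal{N}_{Wh}(\pi)$ would, via Proposition \ref{W is non-zero}, place some $\mathcal{O}'' \in \mathcal{N}_{\tr}(\pi)$ strictly above $\mathcal{O}$, contradicting its maximality.

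The genuinely delicate content lies in the preparatory lemmas rather than in this final assembly. The one step with a covering-group flavor is the vanishing of $W_{n, \chi}'$, which rests on Lemma \ref{injectivity}(2) and, through it, on Lemma \ref{convolution}, i.e.\ on the compatibility of convolution with the chosen splittings. Once these are in hand, the remainder is a routine matter of matching dimensions and running the standard order-theoretic argument on nilpotent orbits, mirroring the linear case in \cite{MW87, San14}.
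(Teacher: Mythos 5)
Your proposal is correct and follows essentially the same route as the paper: part (2) via Lemma \ref{dimW_n is c_O}, the injectivity of $I_{n,m}'$ from Lemma \ref{injectivity} and Equation \ref{composition of I'} to kill $W_{n,\chi}'$, and Lemmas \ref{W_n non zero} and \ref{j_n and j_n'} to identify $W_n'$ with $\mathcal{W}$; then part (1) by the same order-theoretic argument using Proposition \ref{W is non-zero}. The only difference is cosmetic ordering of the two inclusions in part (1).
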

\begin{proof}
Let $\mathcal{O}$ be a maximal element in $\mathcal{N}_{\tr}(\pi)$. Choose $(Y, \varphi)$ such that $Y \in \mathcal{O}$ and $\varphi : \mathbb{G}_{m} \longrightarrow {\bf G}$ satisfying $\Ad(\varphi(s))Y = s^{-2}Y$. Then, from Lemma \ref{dimW_n is c_O}, for large $n$ we have
\[
\dim W_{n} = c_{\mathcal{O}}.
\]
Therefore $W_{n} \neq 0$ (resp $W_{n}' \neq 0$) for large $n$ . From Lemma \ref{j_n and j_n'}, the map $j_{n}'$ is injective and maps surjectively onto $(W/N_{\chi}'W)^{(H_{0}, \tilde{\chi})}$. But from second part of Lemma \ref{injectivity} and Equation \ref{composition of I'}, $I_{n,m}'$ is injective for large $n$ and $m>n$ which implies that $W_{n, \chi}' = \cup_{m>n} \ker(I_{n,m}') =0$. Thus $\dim \mathcal{W} = \dim W_{n}' = \dim W_{n} = c_{\mathcal{O}}$, which proves part 2 of the theorem.  In particular, $\mathcal{W} \neq 0$ and hence $\mathcal{O} \in  \mathcal{N}_{\Wh}(\pi)$. Now we claim that $\mathcal{O}$ is maximal in $\mathcal{N}_{\Wh}(\pi)$. If not, there is a maximal orbit $\mathcal{O}' \in \mathcal{N}_{\Wh}(\pi)$ such that $\mathcal{O} \lneq \mathcal{O}'$. From Proposition \ref{W is non-zero}, there is $\mathcal{O}'' \in \mathcal{N}_{\tr}(\pi)$ such that $\mathcal{O}' \leq \mathcal{O}''$. Therefore $\mathcal{O} \lneq \mathcal{O}''$ and $\mathcal{O}, \mathcal{O}'' \in \mathcal{N}_{\tr}(\pi)$, a contradiction to the maximality of $\mathcal{O}$ in $\mathcal{N}_{\tr}(\pi)$. \\

Let $\mathcal{O}$ be a maximal element in $\mathcal{N}_{\Wh}(\pi)$. From Proposition \ref{W is non-zero}, there is an element in $\mathcal{O}' \in \mathcal{N}_{\tr}(\pi)$ such that $\mathcal{O} \leq \mathcal{O}'$. Take such a maximal $\mathcal{O}'$. Then by the result in the above paragraph, $\mathcal{O}'$ is a maximal element in $\mathcal{N}_{\Wh}(\pi)$. But $\mathcal{O}$ is also maximal in $\mathcal{N}_{\Wh}(\pi)$. Hence $\mathcal{O}=\mathcal{O}'$. This proves that $\mathcal{O}$ is a maximal element in $\mathcal{N}_{\tr}(\pi)$ too. 
\end{proof}

\end{document}